\documentclass[preprint]{elsarticle}

\usepackage{amssymb}
\usepackage{amsmath}
\usepackage{amsthm}
\usepackage{enumerate}
\usepackage{graphicx}
\usepackage{amsfonts}
\usepackage{mathtools,xcolor,url}
 \usepackage{minipage}
\usepackage{float}
\usepackage{todonotes}
\newtheorem{tb}{Table}

\journal{...}

\newtheorem{theorem}{Theorem}[section]
\newtheorem{definition}{Definition}[section]

\newtheorem{cor}{Corollary}[section]
\newtheorem{prop}{Proposition}[section]
\newtheorem{remark}{Remark}[section]

\newtheorem{conclusion}{Conclusion}[section]
\newtheorem{exam}{Example}[section]

\newtheorem{Conjecture}{Conjecture}[section]

\numberwithin{equation}{section}

\begin{document}

	\begin{frontmatter}

		\title{Aldaz-Kounchev-Render operators and their approximation properties }

		\author[1]{Ana-Maria Acu} 
		\author[2,3]{Stefano De Marchi}
		\author[4]{Ioan Ra\c sa}
		
		\vspace{10cm}
		

		\address[1]{Lucian Blaga University of Sibiu, Department of Mathematics and Informatics,  Romania, e-mail: anamaria.acu@ulbsibiu.ro}
		\address[2]{University of Padova, Department of Mathematics “Tullio Levi-Civita”, Italy,
			e-mail:  stefano.demarchi@unipd.it}
		\address[3]{INdAM Gruppo Nazionale di Calcolo Scientifico}
		
		\address[4]{Technical University of Cluj-Napoca, Faculty of Automation and Computer Science, Department of Mathematics, Str. Memorandumului nr. 28, 400114 Cluj-Napoca, Romania
			e-mail:  ioan.rasa@math.utcluj.ro }

		\begin{abstract} 	
			{The approximation properties of the Aldaz-Kounchev-Render (AKR) operators are discussed and classes of functions for which these operators approximate better than the classical Bernstein operators are described. The new results are then extended to the bivariate case on the square $[0,1]^2$ and compared with other existing results known in literature. Several numerical examples, illustrating the relevance and supporting the theoretical findings, are presented.} 
		\end{abstract}

		\begin{keyword} Aldaz-Kounchev-Render operators; Bernstein operator; convex functions; tensor product.
			
			\MSC[2010]  41A36,  41A15, 65D30.
		\end{keyword}
		
	\end{frontmatter}

	\section{Introduction}
	The functions fixed by the positive linear operators $L_n$, $n\geq 1$, encode important information about the operators. Algebraically, because they are eigenfunctions associated with the eigenvalue $1$. Analytically, because they have impact on the approximation properties of $L_n$: in particular, they determine to a large extent the structure of the Voronovskaja operator associated with the sequence $(L_n)_{n\geq 1}$. And, very important for CAGD, the shape preserving properties of the operators are intimately related with the fixed functions (see \cite{AKR2009}).
	
	Generally speaking, given a classical sequence $(L_n)_{n\geq 1}$ the functions fixed by $L_n$ are well-known. In the last decades an inverse problem was raised: given some functions, construct a sequence of positive linear operators $(L_n)_{n\geq 1}$ (an approximation process) such that each $L_n$ fixes the given functions. For example, the operators constructed by King \cite{1} on $C[0,1]$ preserve the functions {\bf 1} and $x^2$. Operators on $C[0,1]$ preserving {\bf 1} and a given function $\tau$ were constructed in \cite{3} and  \cite{Gonska}.   Aldaz,  Kounchev and  Render \cite{AKR2009} introduced a sequence of Bernstein type polynomial operators on $C[0,1]$ which preserve {\bf 1} and $x^j$, $j\in {\mathbb N}$ being given. Several papers were subsequently devoted to their study. In particular, the Voronovskaja formula conjectured in \cite{Rasa2012} was proved in \cite{Bir2017} (see also \cite{Gavrea}, \cite{Finta1}, \cite{Acu}).
	
	The aim this paper is twofold. On one hand, we introduce the bivariate AKR operators on $C([0,1]^2)$ and investigate some of their approximation properties. On the other hand, we compare, in the univariate case and also in the bivariate case, the properties of Bernstein and AKR operators. In particular, we describe classes of functions on which AKR operators approximate better than Bernstein operators, and classes of functions on which the approximation given by Bernstein operators is better that the approximation provided  by AKR operators. The analytic description is accompanied by numerical and graphical experiments.
	
	\begin{definition}
		Let $I,J$ be  compact intervals of the real axis, and let $L:C(I)\to C(I)$, $M:C(J)\to C(J)$ be discretely defined operators
		$$L(f;x)=\displaystyle\sum_{i=0}^n f(x_{i})p_{i}(x),\,\, f\in C(I),  $$
		and
		$$M(f;y)=\displaystyle\sum_{k=0}^m f(y_{k})q_{k}(y),\,\, f\in C(J), $$
		where  $x_{i}\in I$, $y_{k}\in J$ are mutually distinct, and $p_{i}\in C(I)$, $q_{j}\in C(J)$.
	\end{definition}
Let $(x,y)\in I\times J$.  The parametric extensions of $L$ and $M$ to $C(I\times J)$ are given by
	$$_xL(f;x,y)=\displaystyle\sum_{i=0}^nf(x_{i},y)p_{i}(x)$$
	and
	$$_yM(f;x,y)=\displaystyle\sum_{k=0}^mf(x,y_{k})q_{k}(y).$$
	The tensor product of $L$ and $M$ is given by
	$$Tf(x,y):= \left(_xL\circ \,_yM\right)(f;x,y)=\displaystyle\sum_{i=0}^n\sum_{k=0}^m f(x_{i},y_{k})p_{i}(x)q_{k}(y),\,\, f\in C(I\times J). $$
	Throughout the paper we use the notation $e_i(x)=x^i,\,\, i=0,1,\dots$ and 
	$\|\cdot\|$ stands for the supremum norm.

	\section{Application to Bernstein operators}
	Let   $B_{n}:C[0,1]\to C[0,1]$ be the classical Bernstein operator defined as
	$$B_n(f;x)=\displaystyle\sum_{i=0}^n f\left(\dfrac{i}{n}\right)p_{n,i}(x),$$ where   $p_{n,i}(x)={n\choose i}x^i(1-x)^{n-i},\, x\in [0,1]$.
	Then, for $f\in C([0,1]^2)$,  the tensor product of $B_{n}, B_m$ is given by
	\begin{equation}\label{Bernstein} B_{n,m}
		f(x,y):=(\, _xB_n\circ \,_yB_m)f(x,y)=\displaystyle\sum_{i=0}^{n}\sum_{k=0}^{m}f\left(\dfrac{i}{n}, \frac{k}{m}\right)p_{n,i}(x) p_{m,k}(y). \end{equation}

	Denote by $C^{p,q}([0,1]^2)$ the space of all real valued functions defined on $[0,1]^2$ and having continuous partial derivatives of order $p$, respectively $q$.

	Let $f\in C^{2,2}([0,1]^2)$.  The following approximation formula for  the bivariate Bernstein operators $B_{n,m}$  was obtained in \cite{Banach}
	\begin{equation}
		\label{eq.1} \left| f(x,y)-B_{n,m}f(x,y)\right|\leq \dfrac{3}{2}\left[\dfrac{x(1-x)}{n}\|f^{(2,0)}\|+\dfrac{y(1-y)}{m}\|f^{(0,2)}\|\right].
	\end{equation}
	
	Motivated by the  well-known results for the classical Bernstein operators, namely
	\begin{equation}\label{e2} |f(x)-B_{n}(f;x)|\leq\displaystyle\frac{1}{2}\| f^{\prime\prime}\|\frac{x(1-x)}{n},  \end{equation}
	in \cite{Banach} the constant $3/2$  was improved and the following approximation formula was obtained (see also \cite[Theorem 2.3]{BarPop2008})
	\begin{align}
		\label{eq.2} 
		&\left| f(x,y)-B_{n,m}f(x,y)\right|\\
		&\leq \dfrac{x(1-x)}{2n}\|f^{(2,0)}\|+\dfrac{y(1-y)}{2m}\|f^{(0,2)}\|+\dfrac{x(1-x)y(1-y)}{4nm}\|f^{(2,2)}\|.\nonumber
	\end{align}
	
	In the following we obtain a new result that improves  (\ref{eq.1}) and (\ref{eq.2}). 
	
	\begin{prop} 
		Let $f\in C^{2,2}([0,1]^2)$. Then
		\begin{equation}
			\label{eq.3} \left| f(x,y)-B_{n,m}f(x,y)\right|\leq \dfrac{1}{2}\left[\dfrac{x(1-x)}{n}\|f^{(2,0)}\|+\dfrac{y(1-y)}{m}\|f^{(0,2)}\|\right].
		\end{equation}
		
	\end{prop}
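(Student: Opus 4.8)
The plan is to exploit the tensor-product structure $B_{n,m} = {}_xB_n \circ {}_yB_m$ together with the sharp univariate estimate \eqref{e2}, bypassing entirely the weaker inequalities \eqref{eq.1} and \eqref{eq.2}. The starting point is the telescoping identity
\begin{equation*}
f - B_{n,m}f = \bigl(f - {}_xB_n f\bigr) + {}_xB_n\bigl(f - {}_yB_m f\bigr),
\end{equation*}
which follows at once from $B_{n,m}f = {}_xB_n\bigl({}_yB_m f\bigr)$ and the linearity of ${}_xB_n$. The idea is to bound the two summands separately, the first directly by the one-dimensional result and the second by combining the one-dimensional result with the positivity of the outer operator.

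First I would estimate the two ``fibrewise'' errors. For fixed $y$ the section $f(\cdot,y)$ lies in $C^2[0,1]$ with second derivative $f^{(2,0)}(\cdot,y)$, so \eqref{e2} gives $\bigl|f(x,y) - {}_xB_n f(x,y)\bigr| \le \tfrac{1}{2}\,\tfrac{x(1-x)}{n}\,\|f^{(2,0)}\|$, using that the sup-norm on a vertical fibre is dominated by the sup-norm on the whole square. Symmetrically, treating $x$ as a parameter, $\bigl|f(x,y) - {}_yB_m f(x,y)\bigr| \le \tfrac{1}{2}\,\tfrac{y(1-y)}{m}\,\|f^{(0,2)}\|$.

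The crucial step is the second summand. Writing $h = f - {}_yB_m f$ and using the positivity of ${}_xB_n$ together with the partition of unity $\sum_{i=0}^n p_{n,i}(x) = 1$, I obtain
\begin{equation*}
\bigl|{}_xB_n h(x,y)\bigr| \le \sum_{i=0}^n \bigl|h(\tfrac{i}{n},y)\bigr|\,p_{n,i}(x) \le \frac{1}{2}\,\frac{y(1-y)}{m}\,\|f^{(0,2)}\|\sum_{i=0}^n p_{n,i}(x).
\end{equation*}
The point is that the fibrewise bound for $h$ depends only on $y$, hence is a constant in the $x$-variable; since ${}_xB_n$ reproduces constants the remaining sum equals $1$ and the estimate passes through with no enlargement. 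Adding the two contributions produces exactly \eqref{eq.3}.

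The only genuine obstacle here is conceptual rather than computational: one must apply the decomposition in the correct order, so that the $y$-error (independent of $x$) is the one acted upon by the outer operator ${}_xB_n$. This is precisely what prevents the appearance of the mixed term $\tfrac{x(1-x)y(1-y)}{4nm}\|f^{(2,2)}\|$ of \eqref{eq.2} and sharpens the constant $3/2$ of \eqref{eq.1} down to $1/2$; note also that the argument uses only positivity and constant-reproduction of the outer operator, so nothing beyond \eqref{e2} is required.
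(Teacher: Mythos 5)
Your proposal is correct and follows essentially the same route as the paper: the identity $f-B_{n,m}f=(f-{}_xB_nf)+{}_xB_n(f-{}_yB_mf)$, the univariate estimate \eqref{e2} applied fibrewise, and the fact that the positive operator ${}_xB_n$ reproduces the $x$-independent bound $\tfrac{y(1-y)}{2m}\|f^{(0,2)}\|$ are exactly the ingredients of the paper's argument. Your remark explaining why this ordering avoids the mixed term of \eqref{eq.2} is a nice addition, but the proof itself is the same.
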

	
	\begin{proof}
		Using the estimate (\ref{e2}) we get
		\begin{align*}
			&|f(x,y)-B_{n,m}f(x,y)|\leq |f(x,y)-\,_xB_{n}f(x,y)|\\
			&+|_xB_{n}f(x,y)-(\, _xB_n\circ \,_yB_m)f(x,y)|\\
			&\leq |f(x,y)-\,_xB_{n}f(x,y)|+\,_xB_n\left(|f-\,_yB_mf|\right)(x,y)\\
			&\leq\dfrac{x(1-x)}{2n}\|f^{(2,0)}+\,_xB_n\left(\dfrac{y(1-y)}{2m}\|f^{(0,2)}\|\right)\\
			&\leq \dfrac{1}{2}\left[\dfrac{x(1-x)}{n}\|f^{(2,0)}\|+\dfrac{y(1-y)}{m}\|f^{(0,2)}\|\right].
		\end{align*}
	\end{proof}

	\section{Bernstein-type operator of Aldaz, Kounchev and Render}
	
	Starting from the classical Bernstein operators $B_n$ defined on $ C[0,1]$,
	during recent years some modifications have been considered. One of them was introduced by J.P. King \cite{1} in order to obtain linear positive operators which preserve  the functions $e_0$ and $e_2$.  A slight extension of King operators was considered by C\'ardenas-Morales et al. in \cite{CGM2006} 
	where  a sequence of operators $B_{n,\alpha}$ that preserve $e_0$ and $e_2+\alpha e_1$, $\alpha\in [0,+\infty)$ was introduced.
	Using a continuous strictly increasing function $\tau$ defined on $[0,1]$ with $\tau(0)=0$ and $\tau(1)=1$, $\tau'(x)>0, x\in  [0,1]$, C\'ardenas-Morales et al. (see  \cite{3}, \cite{Gonska}) introduced a modification of the Bernstein operator which preserves the functions $e_0$ and $\tau$.

	For $j>1$, $j\in {\mathbb N}$  fixed and   $n\geq j$,  Aldaz, Kounchev and Render \cite{AKR2009} introduced a { polynomial}  operator $B_{n,j}: C[0,1]\to C[0,1]$ that fixes $e_0$ and $e_j$.  The operator  is explicitly given by
	\begin{equation}\label{e4} B_{n,j}(f;x)=\displaystyle\sum_{k=0}^n f\left(t_{n,k}^j\right)p_{n,k}(x),\end{equation}
	where $$t_{n,k}^j=\left(\dfrac{k(k-1)\dots(k-j+1)}{n(n-1)\dots(n-j+1)}\right)^{1/j}.$$
	
	For $n = 10$ and $j = 2$ in Figure \ref{fig:4} the nodes of  AKR operator and Bernstein operator, respectively, are illustrated graphically. 
	
	\begin{figure}[htbp]
		\centering
		\includegraphics[height=30mm,keepaspectratio]{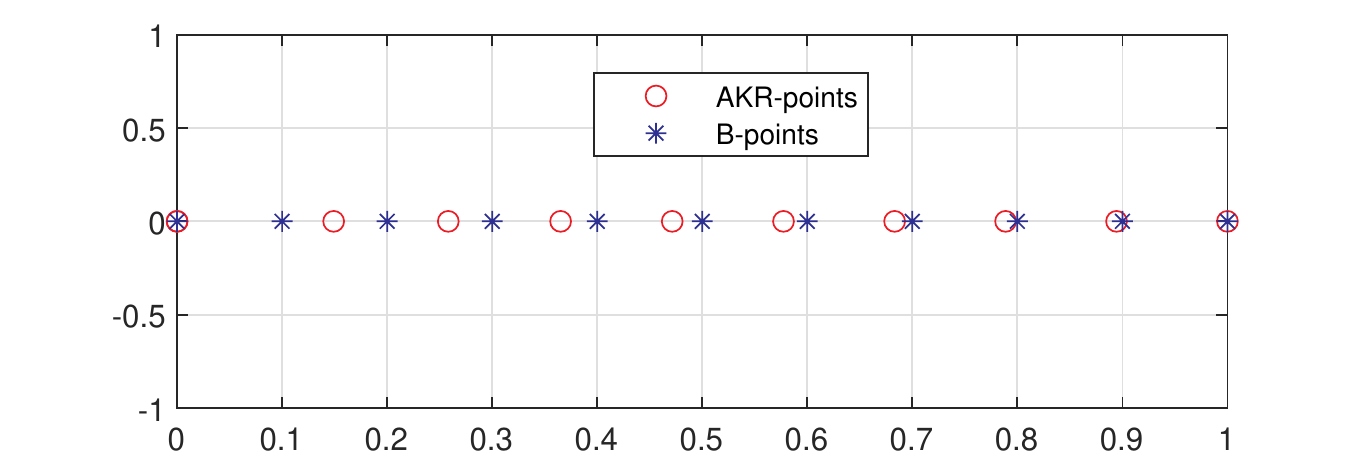}
		\caption{Nodes of $B_{n,j}$ and $B_n$ } \label{fig:4}
	\end{figure}
	
	Next we determine a class of functions for which the approximation by  the AKR operators is better than the approximation by the Bernstein operators. In order to describe  this class of functions, in the sequel  we will  recall some necessary  notions from the literature (see \cite{AKR2009}).

	We say that $(f_0,f_1)$ is a {\it Haar pair} if $f_0$ is strictly positive and $f_1/f_0$ strictly increasing.
	
	\begin{definition}(see \cite[p. 280]{18})
		A function $f:[a,b]\to {\mathbb R}$ is called $(f_0,f_1)$-convex if for all $x_0,x_1,x_2\in [a,b]$ with $x_0<x_1<x_2$, the determinant
		$$ det\left(\begin{array}{ccc}
			f_0(x_0) & f_0(x_1) &f_0(x_2)\\
			f_1(x_0) & f_1(x_1) &f_1(x_2)\\
			f(x_0) & f(x_1) &f(x_2)\\
		\end{array}\right)  $$
		is non-negative.
		
	\end{definition}
	
	We will use the following characterization of $(f_0,f_1)$-convexity given in \cite[Theorem 5]{5}:
	
	\begin{theorem}(\cite{5})\label{t4.1}
		Let $(f_0,f_1)$ be a Haar pair and let $I:=(f_1/f_0)([a,b])$. Then $f\in C[a,b]$ is $(f_0,f_1)$-convex if and only if $(f/f_0)\circ (f_1/f_0)^{-1}\in C(I)$ is convex in the standard sense.
	\end{theorem}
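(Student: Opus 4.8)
The plan is to transport the problem to the real line via the strictly increasing map $u:=f_1/f_0$. Since $f_0>0$ and $f_0,f_1\in C[a,b]$, the quotient $u$ is continuous and strictly increasing, hence a homeomorphism of $[a,b]$ onto $I=u([a,b])$ with continuous inverse $u^{-1}$. Setting $g:=(f/f_0)\circ u^{-1}$, the continuity of $f$, the strict positivity of $f_0$ and the continuity of $u^{-1}$ show that $g\in C(I)$ is automatic, so the only real content of the statement is the equivalence between the sign condition defining $(f_0,f_1)$-convexity and the ordinary convexity of $g$. I would establish this equivalence through a single determinant manipulation, arranging matters so that both universally quantified sign conditions are compared over the same family of ordered triples.

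First I would fix $x_0<x_1<x_2$ in $[a,b]$ and put $s_i:=u(x_i)$, so that $s_0<s_1<s_2$ lie in $I$. In the determinant
$$D:=\det\begin{pmatrix} f_0(x_0) & f_0(x_1) & f_0(x_2)\\ f_1(x_0) & f_1(x_1) & f_1(x_2)\\ f(x_0) & f(x_1) & f(x_2)\end{pmatrix}$$
I would factor $f_0(x_i)>0$ out of the $i$-th column, using the multilinearity of the determinant in its columns. The entries of the second and third rows then become $f_1(x_i)/f_0(x_i)=u(x_i)=s_i$ and $f(x_i)/f_0(x_i)=g(u(x_i))=g(s_i)$, whence
$$D=f_0(x_0)\,f_0(x_1)\,f_0(x_2)\,\det\begin{pmatrix} 1 & 1 & 1\\ s_0 & s_1 & s_2\\ g(s_0) & g(s_1) & g(s_2)\end{pmatrix}.$$

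Since the scalar factor $f_0(x_0)f_0(x_1)f_0(x_2)$ is strictly positive, $D\ge 0$ holds if and only if the reduced determinant is $\ge 0$. Dividing the reduced determinant by the positive Vandermonde determinant $\prod_{i<j}(s_j-s_i)$ yields exactly the second divided difference $[s_0,s_1,s_2]\,g$, so the reduced determinant is non-negative precisely when $[s_0,s_1,s_2]\,g\ge 0$; by the classical divided-difference criterion this holds for every triple $s_0<s_1<s_2$ in $I$ if and only if $g$ is convex. Because $u$ is an increasing bijection of $[a,b]$ onto $I$, the correspondence $(x_0,x_1,x_2)\mapsto(s_0,s_1,s_2)$ is an order-preserving bijection between ordered triples, so ``$D\ge 0$ for all $x_0<x_1<x_2$'' is equivalent to ``the reduced determinant is $\ge 0$ for all $s_0<s_1<s_2$'', and hence to the convexity of $g$. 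This settles both implications at once.

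The computation itself is short, so there is no serious analytic obstacle; the two points that genuinely need care are that the triple correspondence be order-preserving and onto, so that the quantified sign conditions transfer in both directions, and the clean invocation of the classical characterization of convexity through the non-negativity of the $3\times 3$ determinant (equivalently the second divided difference). The positivity of $f_0$ is what guarantees that the column-scaling step preserves the sign of $D$, and is therefore the structural hypothesis doing the decisive work.
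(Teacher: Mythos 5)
Your argument is correct and complete: factoring the strictly positive values $f_0(x_i)$ out of the columns, recognizing the reduced determinant as a positive multiple of the second divided difference $[s_0,s_1,s_2]g$, and using the order-preserving bijection $(x_0,x_1,x_2)\mapsto(u(x_0),u(x_1),u(x_2))$ to transfer the two universally quantified sign conditions settles both implications at once. Note, however, that the paper itself gives no proof of this statement --- it is quoted verbatim from Bessenyei--P\'ales \cite{5} --- so there is no in-paper argument to compare against; your proof is the standard self-contained one, and the only hypotheses it actually uses (positivity of $f_0$ for the column scaling, strict monotonicity of $f_1/f_0$ for the triple correspondence) are exactly those packaged in the Haar-pair assumption.
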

	Let ${\cal B}_n$ be a Bernstein type operator defined as
	$$ {\cal B}_nf(x)=\displaystyle\sum_{k=0}^n f(t_{n,k})\alpha_{n,k}p_{n,k} (x),\, f\in C[a,b], x\in [a,b], $$
	where $\alpha_{n,i}>0, i=1,\dots,n$, $t_{n,k}\in [a,b]$.
	
	The next result generalizes the well known inequality verified by the classical Bernstein operator $$B_nf\geq f$$ for all convex functions $f\in C[0,1]$.
	
	\begin{theorem}(\cite[Th.15]{AKR2009})\label{t4.2} Assume that for some $n\geq 1$, there is a Bernstein type operator ${\cal B}_n$ fixing $f_0$ and $f_1$. Then for every $(f_0,f_1)$-convex function $f\in C[a,b]$ we have ${\cal B}_nf\geq f$.
	\end{theorem}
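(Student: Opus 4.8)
The plan is to reduce the generalized convexity to ordinary convexity via Theorem~\ref{t4.1} and then to recognize the inequality ${\cal B}_nf\geq f$ as a pointwise Jensen inequality for a suitably reweighted discrete measure. Throughout I assume $(f_0,f_1)$ is a Haar pair (as in the situation of interest, $f_0=e_0$ and $f_1=e_j$), so that $\phi:=f_1/f_0$ is a strictly increasing homeomorphism of $[a,b]$ onto $I:=\phi([a,b])$ and, by Theorem~\ref{t4.1}, the function $h:=(f/f_0)\circ\phi^{-1}$ is convex on $I$.

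First I would fix $x\in[a,b]$ and introduce the weights $w_k:=\alpha_{n,k}p_{n,k}(x)$, which are non-negative because ${\cal B}_n$ is a positive operator ($\alpha_{n,k}>0$ and $p_{n,k}(x)\geq 0$). The crucial step is the rescaling $v_k:=f_0(t_{n,k})\,w_k\geq 0$, which turns the two fixing hypotheses into moment conditions: evaluating ${\cal B}_nf_0=f_0$ and ${\cal B}_nf_1=f_1$ at $x$, and using $f_1(t_{n,k})=f_0(t_{n,k})\phi(t_{n,k})$, gives
$$\sum_{k=0}^n v_k=f_0(x),\qquad \sum_{k=0}^n \phi(t_{n,k})\,v_k=f_1(x)=\phi(x)f_0(x).$$
Since $f_0(x)>0$, the numbers $\lambda_k:=v_k/f_0(x)$ form a probability vector whose $\phi$-barycenter is exactly $\phi(x)$, that is $\sum_k\lambda_k\phi(t_{n,k})=\phi(x)$.

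The conclusion then follows by rewriting the operator in the $h$-variable. Using $f(t_{n,k})=f_0(t_{n,k})\,h(\phi(t_{n,k}))$ I obtain
$${\cal B}_nf(x)=\sum_{k=0}^n f(t_{n,k})w_k=\sum_{k=0}^n h(\phi(t_{n,k}))\,v_k=f_0(x)\sum_{k=0}^n\lambda_k\,h(\phi(t_{n,k})).$$
Because $h$ is convex on $I$ and each $\phi(t_{n,k})\in I$, Jensen's inequality yields $\sum_k\lambda_k h(\phi(t_{n,k}))\geq h\!\big(\sum_k\lambda_k\phi(t_{n,k})\big)=h(\phi(x))$, and multiplying by $f_0(x)>0$ gives ${\cal B}_nf(x)\geq f_0(x)h(\phi(x))=f(x)$. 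Since $x$ was arbitrary, ${\cal B}_nf\geq f$. (As a sanity check, for $f_0=e_0$, $f_1=e_1$ one has $\phi(x)=x$ and $h=f$, recovering $B_nf\geq f$ for convex $f$.)

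I expect the only delicate points to be bookkeeping rather than a genuine obstacle: one must (i) justify $w_k\geq 0$, i.e. positivity of ${\cal B}_n$, which is built into the notion of a Bernstein type operator; (ii) keep the Haar-pair hypothesis in force so that $\phi^{-1}$ exists and Theorem~\ref{t4.1} applies; and (iii) note that the barycenter $\phi(x)$ lies in $I$ (automatic, being a convex combination of points of $I$), so convexity of $h$ may be invoked there. A self-contained alternative avoiding Theorem~\ref{t4.1} would argue directly from the determinant definition: $(f_0,f_1)$-convexity means that $f$ dominates the generalized chord $c_0f_0+c_1f_1$ interpolating it at any two nodes, and summing these chord inequalities against the positive weights $w_k$, which reproduce $f_0$ and $f_1$, recovers the bound at $x$; however, the Jensen route above is shorter and is the one I would present.
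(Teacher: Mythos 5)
Your argument is correct, but there is nothing in the paper to compare it against: Theorem~\ref{t4.2} is imported verbatim from Aldaz--Kounchev--Render (Th.~15 of that paper) and is stated here without proof. What you have written is therefore a self-contained derivation, and it holds up. The reduction is clean: positivity of the weights $w_k=\alpha_{n,k}p_{n,k}(x)$, the rescaling $v_k=f_0(t_{n,k})w_k$, and the two fixing conditions turn $\{v_k/f_0(x)\}$ into a probability vector with $\phi$-barycenter $\phi(x)$, after which Theorem~\ref{t4.1} and finite Jensen give ${\cal B}_nf(x)\geq f_0(x)\,h(\phi(x))=f(x)$. Every step is justified: $f_0>0$ by the Haar-pair hypothesis (which you correctly flag as implicit in the statement, since $(f_0,f_1)$-convexity is only discussed in that setting), $\phi$ is a homeomorphism onto $I$ so $h$ is well defined and convex, and the barycenter lies in $I$. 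The original AKR proof works in the generality of extended Chebyshev systems and argues via a generalized support function $c_0f_0+c_1f_1$ touching $f$ from below at $x$, which is then reproduced exactly by the positive operator --- essentially the ``self-contained alternative'' you sketch at the end; your Jensen route is the same idea after conjugating by $\phi$, and it has the advantage of using only tools already present in this paper (Theorem~\ref{t4.1} of Bessenyei--P\'ales). The only cosmetic caveat is that the paper's definition of a Bernstein type operator writes $\alpha_{n,i}>0$ for $i=1,\dots,n$; you need positivity of all weights including $k=0$, which is surely an index typo in the paper but worth stating explicitly if you present this proof.
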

	
	In what follows let $j\geq 2$ and 
	$$ K_j^{[1]}:=\left\{ f\in C[0,1]\, |\, f\textrm{ is increasing, } g(x):=f(x^{1/j}) \textrm{ is convex on } [0,1]\right\}. $$		
	Let $\Omega$ be the set of the functions $\omega$ such that
	\begin{itemize}
		\item[(i)] $\omega\in C[0,1]\cap C^1(0,1]$,
		\item[(ii)] $\omega(x)\geq 0,\,\, x\in[0,1]$,
		\item[(iii)] $\omega^{\prime}(x)\geq 0,\,\, x\in(0,1]$,
		\item[(iv)] there exists $\displaystyle\lim_{x\to 0} x^{j-1}\omega^{\prime}(x)\in\mathbb{R}$.
	\end{itemize}
	
	\begin{theorem}\label{T3.3}
		The following statements are equivalent
		\begin{itemize}
			\item[(a)] $f\in C^2[0,1]\cap K_j^{[1]}$,
			\item[(b)] $f\in C^2[0,1]$, $f^{\prime}(x)\geq 0$, $xf^{\prime\prime}(x)-(j-1)f^{\prime}(x)\geq 0,\, x\in[0,1]$,
			\item[(c)] There exists $\varphi\in \Omega$ such that $f(x)=f(0)+\displaystyle\int_0^x t^{j-1}\varphi(t) dt, \, x\in[0,1]$. 
		\end{itemize}
	\end{theorem}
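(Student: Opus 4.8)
My plan is to establish the two equivalences (a)$\Leftrightarrow$(b) and (b)$\Leftrightarrow$(c) separately, treating condition (b) as the analytic hub that connects the geometric description in (a) with the integral representation in (c). The first equivalence is a chain-rule computation, while the second rests on the representation $f'(x)=x^{j-1}\varphi(x)$ together with a monotonicity argument at the origin.

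For (a)$\Leftrightarrow$(b), since $f\in C^2[0,1]$ the monotonicity of $f$ is equivalent to $f'\ge 0$, so the substance lies in the convexity of $g(x)=f(x^{1/j})$. Writing $u=x^{1/j}$ and differentiating twice by the chain rule gives, on $(0,1)$,
\[
g''(x)=\tfrac{1}{j^{2}}\,u^{1-2j}\bigl(u f''(u)-(j-1)f'(u)\bigr).
\]
Because $u^{1-2j}>0$ there, $g''\ge 0$ on $(0,1)$ is equivalent to $u f''(u)-(j-1)f'(u)\ge 0$ on $(0,1)$; as $f\in C^2$, the left-hand side is continuous on $[0,1]$, so the inequality extends to the whole interval, which is precisely the second condition in (b). Since $g\in C[0,1]$ is convex if and only if $g''\ge 0$ on the interior, (a)$\Leftrightarrow$(b) follows. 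In particular, evaluating the inequality at $x=0$ forces $f'(0)=0$, a fact I will reuse.

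For (c)$\Rightarrow$(b) I would differentiate the representation to get $f'(x)=x^{j-1}\varphi(x)\ge 0$ by (ii), and a direct computation yields $x f''(x)-(j-1)f'(x)=x^{j}\varphi'(x)\ge 0$ by (iii). The only care needed is to confirm $f\in C^{2}[0,1]$ up to the endpoint $0$: on $(0,1]$ one has $f''(x)=(j-1)x^{j-2}\varphi(x)+x^{j-1}\varphi'(x)$, and hypothesis (iv) guarantees that $\lim_{x\to 0^{+}}f''(x)$ exists and is finite, so the derivative-limit theorem gives $f''\in C[0,1]$. For the converse (b)$\Rightarrow$(c) I would set $\varphi(x):=f'(x)/x^{j-1}$ on $(0,1]$; then $\varphi\ge 0$, $\varphi\in C^{1}(0,1]$, and $\varphi'(x)=\bigl(x f''(x)-(j-1)f'(x)\bigr)/x^{j}\ge 0$, so $\varphi$ is non-decreasing. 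Being non-decreasing and bounded below by $0$, $\varphi$ admits a finite one-sided limit as $x\to 0^{+}$; defining $\varphi(0)$ to be this limit yields $\varphi\in C[0,1]$. Property (iv) is then checked from $x^{j-1}\varphi'(x)=f''(x)-(j-1)f'(x)/x$, which tends to $(2-j)f''(0)$ since $f'(0)=0$, and, because $f'(t)=t^{j-1}\varphi(t)$ on all of $[0,1]$, the fundamental theorem of calculus delivers the representation in (c).

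The delicate point in both directions is the behaviour at $x=0$, where the weight $x^{j-1}$ degenerates. In (b)$\Rightarrow$(c) the apparent danger is that $\varphi=f'/x^{j-1}$ could blow up at the origin; the \emph{key} observation removing this obstacle is that the inequality in (b) makes $\varphi$ monotone, and monotonicity combined with the sign condition forces a finite limit, so no asymptotic expansion of $f$ is required. Symmetrically, in (c)$\Rightarrow$(b) the role of hypothesis (iv) is exactly to recover $C^{2}$-regularity at $0$ via the derivative-limit theorem. Everything else reduces to routine differentiation.
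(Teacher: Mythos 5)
Your proof is correct and follows essentially the same route as the paper: the chain-rule identity $g''(x)=\tfrac{1}{j^2}u^{1-2j}\bigl(uf''(u)-(j-1)f'(u)\bigr)$ for (a)$\Leftrightarrow$(b), the substitution $\varphi=f'/x^{j-1}$ together with the monotonicity-at-the-origin argument for (b)$\Rightarrow$(c), and the derivative-limit theorem at $x=0$ for (c)$\Rightarrow$(b). Incidentally, your value $(2-j)f''(0)$ for $\lim_{x\to 0}x^{j-1}\varphi'(x)$ is the correct one (the paper writes $(1-j)f''(0)$, which appears to be a typo); either way the limit is finite, which is all that condition (iv) requires.
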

	
	\begin{proof}
		To prove that (a) and (b) are equivalent let $f\in C^2[0,1]$ and $g(x):=f(x^{1/j})$, $x\in[0,1]$. Then $f\in K_j^{[1]}$ if and only if $f^{\prime}(x)\geq 0$ and $g^{\prime\prime}(x)\geq 0$, $x\in(0,1]$. But $g^{\prime\prime}(x)\geq 0$ for $0<x\leq 1$ is equivalent to $xf^{\prime\prime}(x)-(j-1)f^{\prime}(x)\geq 0,\, x\in[0,1]$.
		
		In order to prove that (b) implies (c) let
		$$f\in C^2[0,1],\, f^{\prime}(x)\geq 0,\, xf^{\prime\prime}(x)-(j-1)f^{\prime}(x)\geq 0,\, x\in[0,1].$$
		Set $\varphi(x)=\dfrac{f^{\prime}(x)}{x^{j-1}}$, $0<x\leq 1$. Then for $x\in(0,1]$ we have
		$$\varphi^{\prime}(x)=\dfrac{xf^{\prime\prime}(x)-(j-1)f^{\prime}(x)}{x^j}\geq 0.  $$
		Moreover, $\varphi\geq 0$ and $\varphi$ is increasing on $(0,1]$. This implies the existence of $l:= \displaystyle\lim_{x\to 0} \varphi(x)\in[0,\infty)$.
		
		Define
		$$ \varphi(x):=\left\{\begin{array}{l}
			\dfrac{f^{\prime}(x)}{x^{j-1}},\,\, x\in(0,1],\\
			\vspace{-0.3cm}\\
			l,\,\, x=0.\end{array}
		\right. $$
		Then $\varphi\in C[0,1]\cap C^1(0,1]$.
		
		From (b) we see that $f^{\prime}(0)=0$. Now $\displaystyle\lim_{x\to 0} \dfrac{f^{\prime}(x)}{x}=f^{\prime\prime}(0)$ and consequently
		$$ \displaystyle\lim_{x\to 0} x^{j-1}\varphi^{\prime}(x)=(1-j)f^{\prime\prime}(0)\in{\mathbb R}. $$
		From $f^{\prime}(t)=t^{j-1}\varphi(t),\,\, t\in[0,1]$, we get
		$f(x)=f(0)+\displaystyle\int_0^x t^{j-1}\varphi(t) dt, \, x\in[0,1]$, and (c) is proved.
		
		It remains to prove that (c) implies (b).
		Let 	$f(x)=f(0)+\displaystyle\int_0^x t^{j-1}\varphi(t) dt$,  $x\in[0,1]$, with $\varphi\in \Omega$. Then $f^{\prime}(x)=x^{j-1}\varphi(x)$,  $x\in[0,1]$. The function $f^{\prime}$ is continuous on $[0,1]$ and 
		\begin{equation}
			\label{eA**} f^{\prime\prime}(x)=(j-1)x^{j-2}\varphi(x)+x^{j-1}\varphi^{\prime}(x) \textrm{ on } (0,1].
		\end{equation}
		Using (\ref{eA**}) and (iv) we infer that there exists $f^{\prime\prime}(0):=\displaystyle\lim_{x\to 0}f^{\prime\prime}(x)\in {\mathbb R}$. Therefore $f\in C^2[0,1]$. From $f^{\prime}(x)=x^{j-1}\varphi(x)$ we see that $f^{\prime}(x)\geq 0, x\in[0,1]$. Using (\ref{eA**}) we get  
		$$xf^{\prime\prime}(x)-(j-1)f^{\prime}(x)\geq 0,\, x\in(0,1].$$
		By continuity this is true also for $x=0$, and now (b) is proved.
		
	\end{proof}
	
	\begin{prop}\label{p4.1}
		Let $f\in K_j^{[1]}$. Then
	\begin{equation}\label{eD1} f\leq B_{n,j}f\leq B_nf.  \end{equation}
	\end{prop}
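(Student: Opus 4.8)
The plan is to establish the two inequalities in \eqref{eD1} separately. The left inequality $f\leq B_{n,j}f$ I would obtain directly from Theorem~\ref{t4.2}, while the right inequality $B_{n,j}f\leq B_nf$ I would obtain from the monotonicity of $f$ combined with a comparison of the two node families $t_{n,k}^j$ and $k/n$. The key observation is that membership in $K_j^{[1]}$ supplies exactly the two ingredients needed: the convexity of $g(x)=f(x^{1/j})$ for the first inequality, and the monotonicity of $f$ for the second.

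For the left inequality I would first note that $B_{n,j}$ is a Bernstein type operator in the sense of the definition preceding Theorem~\ref{t4.2}, with weights $\alpha_{n,k}=1$ and nodes $t_{n,k}^j$, and that it fixes $e_0$ and $e_j$. I would check that $(e_0,e_j)$ is a Haar pair, since $e_0$ is strictly positive and $e_j/e_0=e_j$ is strictly increasing on $[0,1]$. Applying Theorem~\ref{t4.1} with $f_0=e_0$, $f_1=e_j$, and noting that $(e_j/e_0)^{-1}(y)=y^{1/j}$, a function $f$ is $(e_0,e_j)$-convex precisely when $(f/e_0)\circ(e_j/e_0)^{-1}(y)=f(y^{1/j})=g(y)$ is convex in the usual sense. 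Since $f\in K_j^{[1]}$ forces $g$ to be convex, $f$ is $(e_0,e_j)$-convex, and Theorem~\ref{t4.2} yields $B_{n,j}f\geq f$.

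For the right inequality I would write
$$ B_nf(x)-B_{n,j}f(x)=\sum_{k=0}^n\bigl(f(k/n)-f(t_{n,k}^j)\bigr)\,p_{n,k}(x), $$
and, since $p_{n,k}(x)\geq 0$ and $f$ is increasing, reduce the claim to showing $t_{n,k}^j\leq k/n$ for all $k$. For $0\leq k\leq j-1$ the product $k(k-1)\cdots(k-j+1)$ contains the factor $0$, so $t_{n,k}^j=0\leq k/n$. For $j\leq k\leq n$ all factors are positive, and raising to the $j$-th power the desired inequality becomes, after dividing numerator and denominator by $k^j$ and $n^j$ respectively,
$$ \prod_{i=0}^{j-1}\Bigl(1-\frac{i}{k}\Bigr)\leq\prod_{i=0}^{j-1}\Bigl(1-\frac{i}{n}\Bigr), $$
which holds termwise because $k\leq n$ gives $1-i/k\leq 1-i/n$, all factors being positive.

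The main obstacle I expect is purely organizational: the careful treatment of the small indices $k<j$, where the AKR nodes collapse to $0$ and the generic positivity argument no longer applies. Once that boundary case is separated out, the node comparison is an elementary termwise estimate, and the conceptual content lies entirely in recognizing that $K_j^{[1]}$ delivers both the $(e_0,e_j)$-convexity feeding Theorem~\ref{t4.2} and the monotonicity feeding the node comparison.
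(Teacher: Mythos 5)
Your proposal is correct and follows essentially the same route as the paper: the upper bound $B_{n,j}f\leq B_nf$ from the node comparison $t_{n,k}^j\leq k/n$ together with the monotonicity of $f$, and the lower bound $f\leq B_{n,j}f$ from Theorem~\ref{t4.1} (with $f_0=e_0$, $f_1=e_j$) feeding into Theorem~\ref{t4.2}. The only difference is that you spell out the verification of $t_{n,k}^j\leq k/n$, which the paper leaves as ``easy to verify.''
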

	\begin{proof}
		It is easy to verify that
		$$t_{n,k}^j\leq \dfrac{k}{n},\,\, k=0,\dots,n,\,\, j\geq 2.   $$
		Since $f$ is increasing, we get $f(t_{n,k}^j)\leq f\left(\dfrac{k}{n}\right)$, therefore
		\begin{equation}\label{e3}
			B_{n,j}f\leq B_nf.
		\end{equation}
		We consider $f_0=e_0$ and $f_1=e_j$ in Theorem \ref{t4.1}. Then $(f_0,f_1)$ is a Haar pair and $\left(f/f_0\right)\circ \left(f_1/f_0\right)^{-1}(x)=f\left(x^{1/j}\right)$ is convex.  Applying Theorem \ref{t4.2} (for ${\cal B}_n=B_{n,j}$) we get
		$B_{n,j}f\geq f$. Combined with  (\ref{e3}) this proves  the inequality (\ref{eD1}).
	\end{proof}

	\begin{cor}\label{p4.2}
		Let $f\in C^2[0,1]$. If
		\begin{equation}\label{e5p}f^{\prime}(x)\geq 0 \textrm{ and } xf^{\prime\prime}(x)-(j-1)f^{\prime}(x)\geq 0,\,\, 0\leq x\leq 1, \end{equation}
		then
		$$ f\leq B_{n,j}f\leq B_nf.  $$
	\end{cor}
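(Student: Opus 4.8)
The plan is to recognize that this corollary is an immediate consequence of the two results just established, namely Theorem~\ref{T3.3} and Proposition~\ref{p4.1}, so that essentially no new computation is required. The key observation is that the hypothesis~\eqref{e5p} imposed on $f$ is \emph{verbatim} statement (b) of Theorem~\ref{T3.3}: we are assuming $f\in C^2[0,1]$ together with $f^{\prime}(x)\geq 0$ and $xf^{\prime\prime}(x)-(j-1)f^{\prime}(x)\geq 0$ for all $x\in[0,1]$.

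First I would invoke the equivalence $(b)\Leftrightarrow(a)$ from Theorem~\ref{T3.3}. Since our function $f$ satisfies (b), the theorem guarantees that $f$ also satisfies (a), i.e. $f\in C^2[0,1]\cap K_j^{[1]}$. In particular, $f\in K_j^{[1]}$, which is precisely the membership needed to apply the preceding proposition.

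Next I would apply Proposition~\ref{p4.1} to this $f\in K_j^{[1]}$, which yields directly the double inequality
$$ f\leq B_{n,j}f\leq B_nf, $$
as claimed. This completes the argument.

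I do not expect any genuine obstacle here, since the corollary only reassembles two results already proved in the excerpt. The only point warranting a line of verification is that the differential conditions in~\eqref{e5p} coincide exactly with condition (b) of Theorem~\ref{T3.3}, so that the equivalence applies without any extra hypothesis (for instance, one should note that $f\in C^2[0,1]$ is part of both (b) and the corollary's assumption, so no regularity gap arises). Once this identification is made, the chain $(\text{hypothesis})\Rightarrow f\in K_j^{[1]}\Rightarrow \eqref{eD1}$ is forced.
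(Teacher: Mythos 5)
Your proposal is correct and follows exactly the same route as the paper: the hypothesis~\eqref{e5p} is condition (b) of Theorem~\ref{T3.3}, whose equivalence with (a) gives $f\in K_j^{[1]}$, and Proposition~\ref{p4.1} then yields $f\leq B_{n,j}f\leq B_nf$. No differences worth noting.
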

	\begin{proof}
		According to (\ref{e5p}) and Theorem \ref{T3.3}, $f\in K_j^{[1]}$ and now is sufficient to apply  Proposition \ref{p4.1}.
	\end{proof}

\begin{remark}\label{r3.1}
	Recall the Voronovskaja formula for $B_{n,j}$. If $x\in(0,1]$ and there exists the finite $f^{\prime\prime}(x)$, then
	\begin{equation}\label{eF1} \displaystyle\lim_{n\to\infty} n\left(B_{n,j}(f;x)-f(x)\right)=\dfrac{1-x}{2}\left[xf^{\prime\prime}(x)-(j-1)f^{\prime}(x)\right]. \end{equation}
	 Combining (\ref{eF1}) and Corollary \ref{p4.2} we see that for $f\in C^2[0,1]$ the statements 
	 $$xf^{\prime\prime}(x)-(j-1)f^{\prime}(x)\geq 0, \, x\in[0,1],
	 		 $$
	 		 and
	 		 $$ B_{n,j}(f;x)\geq f(x), \,x\in[0,1], $$
	 		 are equivalent.
\end{remark}
	
	\begin{cor}\label{c3.2}
		Let $\varphi\in C^1[0,1]$ such that $\varphi(x)\geq 0$, $\varphi^{\prime}(x)\geq 0$, $x\in[0,1]$, and let $f(x)=\displaystyle\int_0^x t^{j-1}\varphi(t) dt$, $x\in[0,1]$. Then 
		$$ f\leq B_{n,j}f\leq B_nf.  $$
	\end{cor}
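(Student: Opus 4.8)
The plan is to recognize $f$ as an instance of representation (c) in Theorem \ref{T3.3} and then to invoke Corollary \ref{p4.2}. The whole argument is a specialization, so rather than computing anew I would lean on the equivalences already established.

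First I would verify that the hypothesis $\varphi\in C^1[0,1]$ with $\varphi\geq 0$ and $\varphi'\geq 0$ places $\varphi$ in the class $\Omega$. Conditions (i)--(iii) are immediate, since $C^1[0,1]\subset C[0,1]\cap C^1(0,1]$ and the sign conditions are assumed outright. The only point requiring a remark is condition (iv): because $\varphi\in C^1[0,1]$ the derivative $\varphi'$ is bounded on $[0,1]$, and since $j\geq 2$ we have $x^{j-1}\to 0$ as $x\to 0^+$; hence $\lim_{x\to 0}x^{j-1}\varphi'(x)=0\in\mathbb{R}$. Thus $\varphi\in\Omega$.

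Next, since $f(x)=\int_0^x t^{j-1}\varphi(t)\,dt=f(0)+\int_0^x t^{j-1}\varphi(t)\,dt$ with $f(0)=0$ and $\varphi\in\Omega$, statement (c) of Theorem \ref{T3.3} holds. By that theorem the equivalent statement (b) holds as well, namely $f\in C^2[0,1]$, $f'(x)\geq 0$, and $xf''(x)-(j-1)f'(x)\geq 0$ for $x\in[0,1]$. These are precisely the hypotheses (\ref{e5p}), so Corollary \ref{p4.2} applies and yields $f\leq B_{n,j}f\leq B_nf$.

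I do not expect any genuine obstacle: the extra regularity $\varphi\in C^1[0,1]$ (stronger than what defines $\Omega$) is exactly what makes the limit condition (iv) automatic, which is the only place where care is needed. If one preferred a self-contained route bypassing Theorem \ref{T3.3}, I would instead differentiate directly, obtaining $f'(x)=x^{j-1}\varphi(x)\geq 0$ together with $xf''(x)-(j-1)f'(x)=x^{j}\varphi'(x)\geq 0$ on $[0,1]$, and then apply Corollary \ref{p4.2}; but the route through Theorem \ref{T3.3} is cleaner and reuses work already done.
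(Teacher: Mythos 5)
Your proposal is correct and follows essentially the same route as the paper: the paper likewise observes that $\varphi\in\Omega$, invokes Theorem \ref{T3.3} to conclude $f\in K_j^{[1]}$, and applies Proposition \ref{p4.1} (your detour through statement (b) and Corollary \ref{p4.2} is the same argument, since that corollary is itself just Theorem \ref{T3.3} plus Proposition \ref{p4.1}). Your explicit verification of condition (iv) of $\Omega$ --- that $\varphi'$ is bounded and $x^{j-1}\to 0$ for $j\geq 2$, so the limit is $0$ --- is a welcome detail the paper leaves implicit.
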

\begin{proof}
	Let us remark that $\varphi\in \Omega$. Now Theorem \ref{T3.3} shows that $f\in K_j^{[1]}$ and an application of Proposition \ref{p4.1} concludes the proof.
\end{proof}

	\begin{exam}\label{ex.1}
		Let $\varphi(t)=\sin\dfrac{\pi t}{2}$, $t\in[0,1]$. Using Corollary \ref{c3.2} with $j=2$, we determine the function $f(x)=\displaystyle\int_0^x t\varphi(t) dt=-\dfrac{2}{\pi}x\cos\dfrac{\pi x}{2}+\dfrac{4}{\pi^2}\sin\dfrac{\pi x}{2}.$ In Figure \ref{fig:1} it can be seen that for this function the  approximation by  the operator $B_{5,2}$ is better than the approximation by the Bernstein operator $B_5$. Moreover, if we denote
		$$ E_{AKR}(f;n,j)=\left\| B_{n,j} f-f\right\| \textrm{ and } E_{B}(f;n)=\left\| B_{n} f-f\right\|,  $$
		the approximation error by AKR operator and the Bernstein operator,
		in Table \ref{table2} we present  $E_{AKR}f$ and  $E_Bf$ for certain values of $n$.

	\begin{tb}\label{table2}
		\centering
		{\it  Error of approximation }
		
		$  $
		
		\begin{tabular}{cccccccc}\hline
			$n$&$5$&$10$&$20$&$30$&$40$&$50$&$60$ \\  \hline
			$E_{B}f$   & $0.0140$ &  $0.0070$ &  $0.0035$ & $0.0023 $ & $0.0017$ & $0.0014$ & $0.0012$\\
			$E_{AKR}f $   & $0.0309$ &  $0.0159$ &  $0.0081$ & $0.0054$ & $0.0041$ & $0.0033$ & $0.0027$\\
			\hline
		\end{tabular}
	\end{tb}
		
		\medskip
		
		\begin{figure}[htbp]
			\centering
			\includegraphics[height=50mm,keepaspectratio]{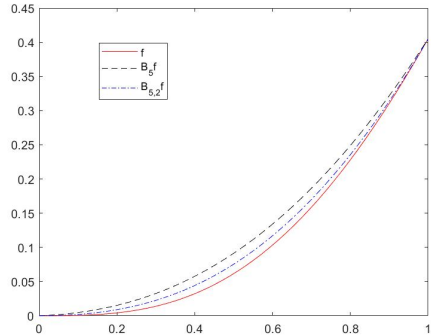}
			\caption{Plots of $f$, $B_{5,2}f$ and $B_5f$} \label{fig:1}
		\end{figure}
	\end{exam}
\begin{exam}
	Take $\varphi(t)=e^t$, $t\in[0,1]$. For $j=5$ Corollary \ref{c3.2} gives the function $f(x)=\displaystyle\int_0^x t^4e^tdt=-24 + (x^4 - 4x^3 + 12x^2 - 24x + 24)e^x$. Figure \ref{F3.2} shows that AKR operator approximates the function  better than the Bernstein  operator. 
	
	\begin{figure}[htbp]
		\centering
		\includegraphics[height=50mm,keepaspectratio]{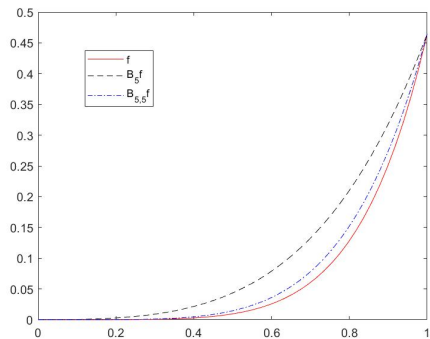}
		\caption{Graph  of $f$, $B_{5,5}f$ and $B_5f$} \label{F3.2}
	\end{figure}

\end{exam}
	
	In the next result we mention a class of functions  for which  the approximation by the Bernstein operators is better than the approximation by AKR operators. 
	
	\begin{prop} Let $f\in C[0,1]$ be a decreasing and  convex function. Then
		\begin{equation}
			\label{eq.A} B_{n,j}f\geq B_n f\geq f.  \end{equation}
	\end{prop}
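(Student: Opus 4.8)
The plan is to split the double inequality into its two halves and handle each with an ingredient already available in the excerpt. The right-hand inequality $B_n f\geq f$ is the classical statement for convex functions that is explicitly recalled just before Theorem~\ref{t4.2}; since $f$ is assumed convex, it applies verbatim and contributes nothing new. So the only piece that requires an argument is the left-hand inequality $B_{n,j}f\geq B_n f$, and here the convexity of $f$ is irrelevant: only the monotonicity will be used.

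First I would write the difference of the two operators over the common weights $p_{n,k}$, which share the same index set:
\begin{equation*}
B_{n,j}f(x)-B_n f(x)=\sum_{k=0}^n\left[f\!\left(t_{n,k}^j\right)-f\!\left(\tfrac{k}{n}\right)\right]p_{n,k}(x).
\end{equation*}
Next I would invoke the node comparison $t_{n,k}^j\leq \tfrac{k}{n}$ for $k=0,\dots,n$ and $j\geq 2$, which was already established at the beginning of the proof of Proposition~\ref{p4.1}. Because $f$ is decreasing, this ordering of the arguments reverses to $f\!\left(t_{n,k}^j\right)\geq f\!\left(\tfrac{k}{n}\right)$, so every bracketed term is non-negative. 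Since the Bernstein basis functions satisfy $p_{n,k}(x)\geq 0$ on $[0,1]$, each summand is non-negative and therefore $B_{n,j}f(x)-B_n f(x)\geq 0$ for all $x\in[0,1]$, i.e. $B_{n,j}f\geq B_n f$.

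Finally I would combine the two halves: $B_{n,j}f\geq B_n f$ from the monotonicity step and $B_n f\geq f$ from convexity, yielding the chain \eqref{eq.A}. There is essentially no serious obstacle here; the only point worth flagging is a conceptual one, namely that the two hypotheses play disjoint roles, \emph{decreasing} drives the comparison $B_{n,j}f\geq B_n f$ and \emph{convex} drives $B_n f\geq f$, which neatly mirrors (and reverses) the situation of Proposition~\ref{p4.1}, where increasing monotonicity forced $B_{n,j}f\leq B_n f$ instead. I would make sure to state explicitly that convexity is not needed for the first inequality, so that the logical structure of the argument is transparent.
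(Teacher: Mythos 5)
Your proposal is correct and follows essentially the same route as the paper: the authors likewise use $t_{n,k}^j\leq k/n$ together with the monotonicity of $f$ to get $f\bigl(t_{n,k}^j\bigr)\geq f\bigl(\tfrac{k}{n}\bigr)$ and hence $B_{n,j}f\geq B_nf$, and then invoke the classical inequality $B_nf\geq f$ for convex functions. Your version merely spells out the termwise comparison and the role separation of the two hypotheses a bit more explicitly.
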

	\begin{proof} Since the function $f$ is decreasing and $t_{n,k}^j\leq \dfrac{k}{n}$, $k=0,\dots,n$, $j\geq 2$, we get 
		$ f\left(t_{n,k}^j\right)\geq f\left(\dfrac{k}{n}\right)$,
		therefore
		$ B_{n,j}f\geq B_nf$. But, it is  well known that $B_nf\geq f$ for each convex function $f$.
		This leads to the inequalities (\ref{eq.A}). 
	\end{proof}
	
	\begin{exam}
		Let $f(x)=\cos^2\left(\dfrac{\pi}{4}(x+1)\right)$, $x\in[0,1]$. Note that $f$ is a decreasing and convex function and $f\in C[0,1]$. In Figure \ref{fig:2} are represented graphically the functions $f$, $B_{n,j}f$ and $B_n f$ for $n=10$ and $j=2$.  Note that in this case the approximation by the Bernstein operator is better than the approximation by AKR operator. If we consider the notation introduced in Example \ref{ex.1} for the approximation error we have
				$$E_{AKR}(f;10,2)=0.0450,\,\,E_B(f;10)=0.0118. $$
		\begin{figure}[htbp]
			\centering
			\includegraphics[height=50mm,keepaspectratio]{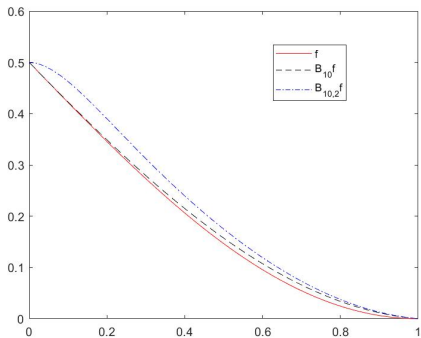}
			\caption{Graph of $f$, $B_{10,2}f$ and $B_{10}f$} \label{fig:2}
		\end{figure}
	\end{exam}
	\section{Bivariate Bernstein-type operator of Aldaz, Kounchev and Render}
	
	Let   $B_{n,j}, B_{m,j}:C[0,1]\to C[0,1]$ be the AKR operators  and $(x,y)\in [0,1]^2$. Then, for $f\in C([0,1]^2)$,  the tensor product of AKR operators is given by
	\begin{equation}\label{Aldaz}B_{n,m,j}(f;x,y)=\displaystyle\sum_{i=0}^{n}\sum_{k=0}^{m}f\left(t_{n,i}^j,t_{m,k}^j\right)p_{n,i}(x) p_{m,k}(y),  \,\, (x,y)\in [0,1]^2.\end{equation}
	
	Since $B_{n,j}$ preserves the functions $1$, $x^j$ and $B_{m,j}$ preserves the functions $1$, $y^j$ defined on $[0,1]$, it follows immediately that $B_{n,m,j}$ preserves the functions $1$, $x^j$, $y^j$ defined on $[0,1]^2$.
	
	The nodes of the operators $B_{10,10,2}$ and $B_{10,10}$, respectively, are presented graphically in Figure \ref{fig:5}. 
	
	\begin{figure}[htbp]
		\centering
		\includegraphics[height=50mm,keepaspectratio]{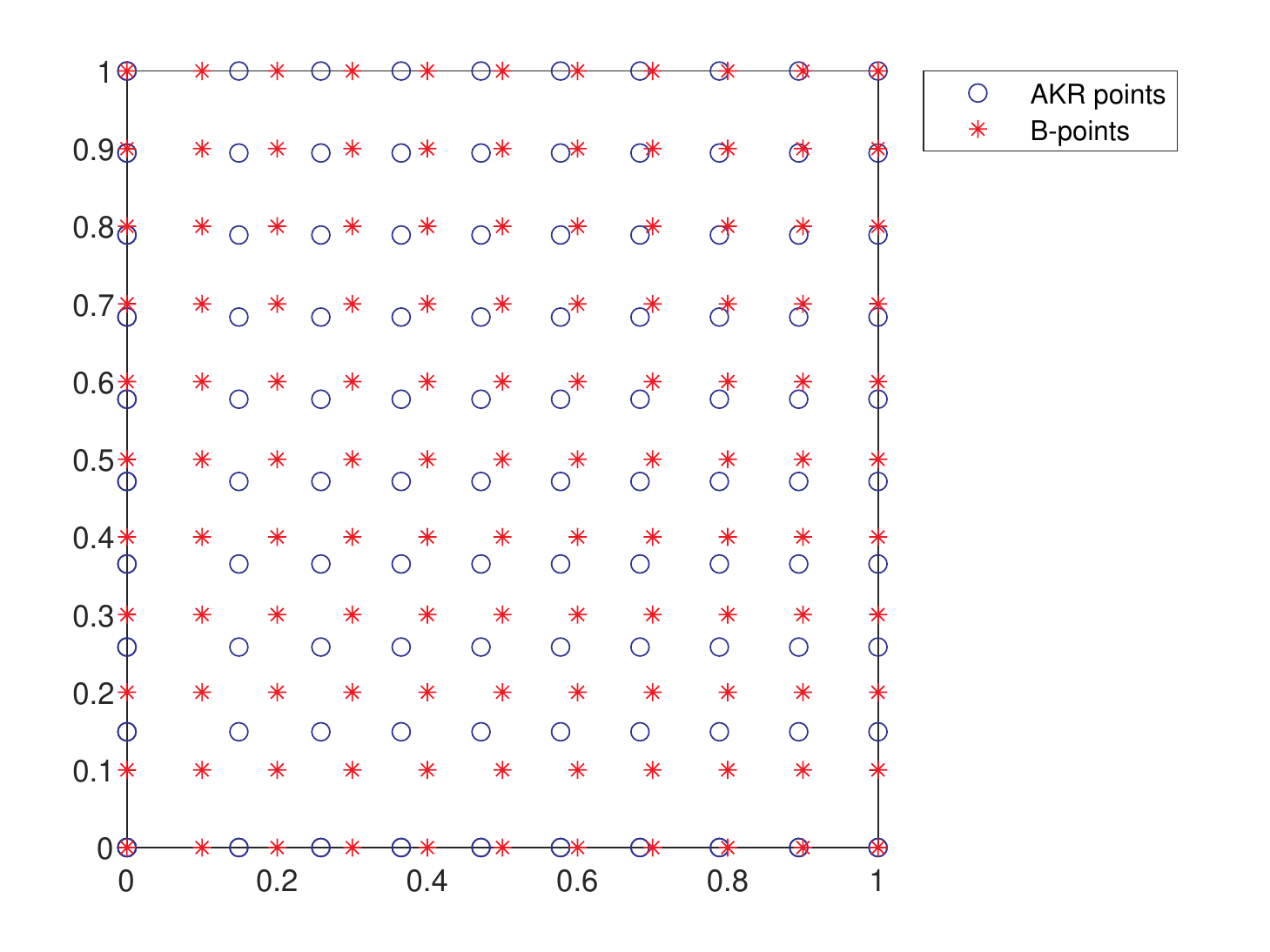}
		\caption{Nodes of $B_{10, 10,2}$ and $B_{10, 10}$ } \label{fig:5}
	\end{figure}
	
	\begin{prop} 
		Let $f\in C^{2,2}([0,1]^2)$. Then
		\begin{align*}
			\left|f(x,y)-B_{n,m,j}(f;x,y)\right|&\leq \dfrac{x(1-x)}{2n}\|f^{(2,0)}\|+\dfrac{y(1-y)}{2m}\|f^{(0,2)}\|\\
			&+\dfrac{j-1}{n}\|f^{(1,0)}\|+\dfrac{j-1}{m}\|f^{(0,1)}\|.
		\end{align*}
		
	\end{prop}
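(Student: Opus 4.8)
The plan is to mirror the proof of the bivariate Bernstein estimate $(\ref{eq.3})$: reduce everything to a one‑dimensional error bound for the AKR operator and then telescope through the parametric extensions. The univariate ingredient I would isolate first is
\[
\bigl|f(x)-B_{n,j}(f;x)\bigr|\le \frac{x(1-x)}{2n}\,\|f''\|+\frac{j-1}{n}\,\|f'\|,\qquad f\in C^2[0,1],
\]
which plays the role that $(\ref{e2})$ plays for $B_n$; the term $\frac{j-1}{n}\|f'\|$ is precisely what will generate the two gradient contributions in the proposition, and it is consistent with the Voronovskaja formula $(\ref{eF1})$ (for $f=e_1$ the bracket there equals $-(j-1)$).

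To obtain the univariate estimate I would compare $B_{n,j}$ with $B_n$, since the two operators share the basis $p_{n,k}$ and differ only in the nodes. Writing
\[
B_{n,j}(f;x)-f(x)=\bigl(B_{n,j}(f;x)-B_n(f;x)\bigr)+\bigl(B_n(f;x)-f(x)\bigr),
\]
the second bracket is controlled by $(\ref{e2})$, giving $\frac{x(1-x)}{2n}\|f''\|$. For the first bracket, the mean value theorem together with $t_{n,k}^j\le k/n$ yields
\[
\bigl|B_{n,j}(f;x)-B_n(f;x)\bigr|\le \|f'\|\sum_{k=0}^n\Bigl(\tfrac{k}{n}-t_{n,k}^j\Bigr)p_{n,k}(x)=\|f'\|\bigl(x-B_{n,j}(e_1;x)\bigr),
\]
so everything rests on the first‑moment estimate $0\le x-B_{n,j}(e_1;x)\le \frac{j-1}{n}$.

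This moment bound is the one genuinely computational point, and I expect it to be the main obstacle. The left inequality is immediate from $t_{n,k}^j\ge 0$. For the right one I would note that the factors $\frac{k-i}{n-i}$ in the definition of $t_{n,k}^j$ decrease in $i$, so the smallest one gives $t_{n,k}^j\ge \frac{k-j+1}{n-j+1}$ (the cases $k\le j-1$, where $t_{n,k}^j=0$, are checked separately), whence the termwise bound
\[
\frac{k}{n}-t_{n,k}^j\le \frac{(j-1)(n-k)}{n(n-j+1)},\qquad k=0,\dots,n.
\]
Summing against $p_{n,k}(x)$ and using $\sum_{k}(n-k)p_{n,k}(x)=n(1-x)$ gives $x-B_{n,j}(e_1;x)\le \frac{(j-1)(1-x)}{n-j+1}$. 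A short case split finishes it: when $nx\ge j-1$ this last quantity is already $\le \frac{j-1}{n}$, while when $nx<j-1$ one simply uses the trivial bound $x-B_{n,j}(e_1;x)\le x<\frac{j-1}{n}$.

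Granting the univariate estimate, the bivariate bound follows formally, exactly as for $B_{n,m}$. Since $B_{n,m,j}={}_xB_{n,j}\circ{}_yB_{m,j}$, the triangle inequality and the positivity of $B_{n,j}$ give
\[
\bigl|f-B_{n,m,j}f\bigr|\le \bigl|f-{}_xB_{n,j}f\bigr|+{}_xB_{n,j}\bigl(\bigl|f-{}_yB_{m,j}f\bigr|\bigr).
\]
The first summand is bounded by the univariate estimate in the variable $x$, producing $\frac{x(1-x)}{2n}\|f^{(2,0)}\|+\frac{j-1}{n}\|f^{(1,0)}\|$. In the second summand the univariate estimate in $y$ (for each fixed first argument) bounds $\bigl|f-{}_yB_{m,j}f\bigr|$ by $\frac{y(1-y)}{2m}\|f^{(0,2)}\|+\frac{j-1}{m}\|f^{(0,1)}\|$, a quantity independent of the first variable; since $B_{n,j}$ reproduces constants ($\sum_i p_{n,i}(x)=1$), applying ${}_xB_{n,j}$ leaves it unchanged. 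Adding the two contributions yields exactly the asserted inequality, with no mixed $\|f^{(2,2)}\|$ term.
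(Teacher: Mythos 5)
Your proof is correct, and its overall architecture is the same as the paper's: a univariate estimate $|f(x)-B_{n,j}(f;x)|\le \frac{x(1-x)}{2n}\|f''\|+\frac{j-1}{n}\|f'\|$ obtained by splitting off $B_n$, followed by the standard telescoping through the parametric extensions, using that ${}_xB_{n,j}$ is positive and reproduces constants. The one place where you genuinely diverge is the key ingredient (\ref{e5}): the paper simply cites $|B_nf-B_{n,j}f|\le\omega_1\bigl(f,\frac{j-1}{n}\bigr)\le\frac{j-1}{n}\|f'\|$ from \cite{AcuRasa}, whereas you derive the $C^1$ version from scratch via the first-moment bound $0\le x-B_{n,j}(e_1;x)\le\frac{j-1}{n}$. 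Your moment computation checks out: the factors $\frac{k-i}{n-i}$ are indeed decreasing in $i$, giving $t_{n,k}^j\ge\frac{k-j+1}{n-j+1}$ for $k\ge j-1$, the termwise bound $\frac{k}{n}-t_{n,k}^j\le\frac{(j-1)(n-k)}{n(n-j+1)}$ in fact holds for all $k$ (for $k\le j-1$ it reduces to $k\le j-1$), and the identity $\sum_k(n-k)p_{n,k}(x)=n(1-x)$ together with your case split on $nx\gtrless j-1$ closes the argument. What this buys is a self-contained proof that does not lean on the external reference, at the cost of a page of node estimates; the paper's route is shorter but imports the modulus-of-continuity comparison as a black box.
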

	
	\begin{proof} In \cite{AcuRasa} the following estimate of the difference between the AKR and Bernstein operators was obtained
		\begin{equation}\label{e5}\left| B_n(f;x)-B_{n,j}(f;x)\right|\leq \omega_1\left(f,\dfrac{j-1}{n}\right)\leq \dfrac{j-1}{n}\|f^{\prime}\|,\,f\in C^1[0,1].  \end{equation}
		From (\ref{e2}) and (\ref{e5}) we obtain
		\begin{align}
			|f(x)-B_{n,j}(f;x)|&\leq |f(x)-B_n(f;x)|+|B_n(f;x)-B_{n,j}(f;x)|\nonumber\\
			&\leq \dfrac{x(1-x)}{2n}\|f^{\prime\prime}\|+\dfrac{j-1}{n}\|f^{\prime}\|.\label{eqTT}
		\end{align}
		Using  (\ref{eqTT})  it follows that
		\begin{align*}			&|f(x,y)-B_{n,m,j}(f;x,y)|\leq |f(x,y)-\,_xB_{n,j}(f;x,y)|\\
			&			+|_xB_{n,j}(f;x,y)-\,_xB_{n,j}\circ\, _yB_{m,j}(f;x,y)|\\
			&\leq |f(x,y)-\,_xB_{n,j}(f;x,y)|+\,_xB_{n,j}\left( |f-\, _yB_{m,j}f|\right)(x,y)\\
			&\leq \dfrac{x(1-x)}{2n}\|f^{(2,0)}\|+\dfrac{j-1}{n}\|f^{(1,0)}\|+\dfrac{y(1-y)}{2m}\|f^{(0,2)}\|+\dfrac{j-1}{m}\|f^{(0,1)}\|.
		\end{align*}
		This concludes the proof.
	\end{proof}
	
	\begin{Conjecture}\label{Conj4.1} In relation with Voronovskaja formula for $B_{n,j}$ (see Remark \ref{r3.1}) we state here a conjecture about the Voronovskaja formula for $B_{n,n,j}$. Suppose that $(x,y)\ne \left\{(0,1),(0,1), (1,0)\right\}$, $f\in C([0,1]^2)$ and the partial derivatives $f^{\prime\prime}_{x^2}$, $f^{\prime\prime}_{y^2}$ exist and are finite at $(x,y)$. Then,
	$$
	    \displaystyle\lim_{n\to\infty} n\left(B_{n,n,j}f(x,y)-f(x,y)\right)=Uf(x,y)+Vf(x,y),$$
	where
	$$Uf(x,y):=
	\dfrac{x(1-x)}{2}{f^{\prime\prime}_{x^2}}^2(x,y)	    -\dfrac{j-1}{2}(1-x)f_x^{\prime}(x,y)$$
	  and
	  $$Vf(x,y):=\dfrac{y(1-y)}{2}{f^{\prime\prime}_{y^2}}^2(x,y)
	    -\dfrac{j-1}{2}(1-y)f_y^{\prime}(x,y).$$
	  
		\end{Conjecture}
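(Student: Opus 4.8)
The plan is to exploit the tensor-product structure $B_{n,n,j}={}_xB_{n,j}\circ{}_yB_{n,j}$ and reduce the bivariate formula to the univariate one recalled in Remark \ref{r3.1}. Since the two parametric extensions act on different variables they commute, and one has the operator identity
\begin{equation*}
B_{n,n,j}-I = ({}_xB_{n,j}-I)({}_yB_{n,j}-I) + ({}_xB_{n,j}-I) + ({}_yB_{n,j}-I),
\end{equation*}
so that
\begin{align*}
n\bigl(B_{n,n,j}f-f\bigr) &= n({}_xB_{n,j}-I)({}_yB_{n,j}-I)f \\
&\quad + n({}_xB_{n,j}-I)f + n({}_yB_{n,j}-I)f.
\end{align*}

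First I would treat the two diagonal terms. For fixed $y$ the map $u\mapsto f(u,y)$ lies in $C[0,1]$ and admits the finite second derivative $f^{\prime\prime}_{x^2}(x,y)$ at $u=x$; applying the univariate formula (\ref{eF1}) in the $x$-variable then gives
\begin{equation*}
\lim_{n\to\infty} n({}_xB_{n,j}-I)f(x,y) = \frac{x(1-x)}{2}f^{\prime\prime}_{x^2}(x,y)-\frac{j-1}{2}(1-x)f^{\prime}_x(x,y) = Uf(x,y),
\end{equation*}
and symmetrically $n({}_yB_{n,j}-I)f(x,y)\to Vf(x,y)$. This is exactly where the exclusion of the points with a vanishing coordinate is needed, since (\ref{eF1}) requires the corresponding variable to lie in $(0,1]$.

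The heart of the matter is to show that the cross term vanishes in the limit. Writing it out,
\begin{equation*}
({}_xB_{n,j}-I)({}_yB_{n,j}-I)f(x,y)=\sum_{i=0}^{n}\sum_{k=0}^{n}\Delta_{ik}\,p_{n,i}(x)p_{n,k}(y),
\end{equation*}
where $\Delta_{ik}=f(t_{n,i}^j,t_{n,k}^j)-f(t_{n,i}^j,y)-f(x,t_{n,k}^j)+f(x,y)$ is the mixed second difference. If one had a finite mixed partial $f^{\prime\prime}_{xy}(x,y)$, then $\Delta_{ik}\approx f^{\prime\prime}_{xy}(x,y)(t_{n,i}^j-x)(t_{n,k}^j-y)$ and the double sum would factor, modulo that constant, as
\begin{equation*}
f^{\prime\prime}_{xy}(x,y)\bigl(B_{n,j}(e_1;x)-x\bigr)\bigl(B_{n,j}(e_1;y)-y\bigr),
\end{equation*}
each factor being $O(1/n)$ — indeed (\ref{eF1}) applied to $e_1$ gives $B_{n,j}(e_1;x)-x=-\tfrac{(j-1)(1-x)}{2n}+o(1/n)$ — so the product is $O(1/n^2)$ and, after multiplication by $n$, tends to $0$.

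The hard part will be that the conjecture assumes only the two \emph{pure} second derivatives $f^{\prime\prime}_{x^2}$, $f^{\prime\prime}_{y^2}$, not the mixed one, so this factorisation is not directly justified. To close the argument under the stated hypotheses one would instead control $|\Delta_{ik}|$ through a mixed modulus of smoothness $\omega_{\mathrm{mix}}(f;\delta_1,\delta_2)$ in terms of $|t_{n,i}^j-x|$ and $|t_{n,k}^j-y|$, and bound the tail contributions using the concentration of the Bernstein basis together with the estimate (\ref{eqTT}); the cleanest alternative is simply to add the hypothesis $f\in C^2([0,1]^2)$, under which the factorisation above is rigorous and the conjecture follows at once. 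I expect the delicate point to be precisely whether the weak pointwise assumptions suffice to force the mixed difference to be $o(1/n)$ on average against the product basis, and this is the step on which I would concentrate the analysis.
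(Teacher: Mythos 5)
The statement you are attacking is stated in the paper as Conjecture~\ref{Conj4.1} and is left open there: the authors give no proof, so there is nothing of theirs to compare your argument against, and what you are really attempting is to settle an open problem. Your decomposition $B_{n,n,j}-I=({}_xB_{n,j}-I)({}_yB_{n,j}-I)+({}_xB_{n,j}-I)+({}_yB_{n,j}-I)$ is the natural route, and your treatment of the two diagonal terms via the univariate formula (\ref{eF1}) is correct wherever that formula applies. Note, however, that the conjecture excludes only three corner points, not all points with a vanishing coordinate; at a point such as $(0,1/2)$ you cannot invoke (\ref{eF1}) in the $x$-variable, and there ${}_xB_{n,j}f(0,y)=f(0,y)$ exactly, so the conjectured limit would force $f'_x(0,y)=0$ --- a boundary subtlety your argument does not address.

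The genuine gap is the one you yourself flag, and it is worth being precise about why your proposed fallback cannot close it. An unsigned bound on the mixed difference, $|\Delta_{ik}|\le C\,|t^j_{n,i}-x|\,|t^j_{n,k}-y|$ (which is all a mixed modulus of smoothness can deliver), combined with Cauchy--Schwarz and $B_{n,j}\bigl((e_1-xe_0)^2;x\bigr)=O(1/n)$, controls the cross term only by $O(n^{-1/2})\cdot O(n^{-1/2})=O(1/n)$; after multiplication by $n$ this is $O(1)$, not $o(1)$. Hence no absolute-value estimate of $\Delta_{ik}$ can make the cross term negligible: one must exploit the signed cancellation coming from the factorization $\bigl(\sum_i(t^j_{n,i}-x)p_{n,i}(x)\bigr)\bigl(\sum_k(t^j_{n,k}-y)p_{n,k}(y)\bigr)=O(1/n^2)$, and that factorization requires some form of mixed differentiability of $f$ near $(x,y)$, which the hypotheses of the conjecture do not grant. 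Under the strengthened assumption $f\in C^{2,2}([0,1]^2)$ your sketch can be completed (freeze $f^{(1,1)}$ at $(x,y)$, factor the main term, and absorb the remainder by a near/far splitting of the nodes), but that establishes only a weaker statement than the conjecture; as stated, the conjecture remains open and your attempt does not prove it.
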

	Define
	$$ K_j^{[2]}:=\left\{f\in C([0,1]^2)\,|\, f(\cdot, y)\in K_j^{[1]}, \, f(x,\cdot)\in K_j^{[1]},\,x,y\in[0,1]\right\}.$$
	
	\begin{theorem}\label{T4.1}
		If $f\in K_j^{[2]}$, then 
		\begin{equation}\label{eB1} f\leq B_{n,m,j}f\leq B_{n,m}f. \end{equation}
	\end{theorem}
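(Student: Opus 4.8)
The plan is to reduce the bivariate statement to the univariate Proposition \ref{p4.1} by iterating it on the parametric extensions, using only two structural facts: each univariate AKR and Bernstein operator is positive (its coefficients $p_{n,i}(x)$, $p_{m,k}(y)$ are nonnegative), and parametric extensions acting on different variables commute, so that $B_{n,m,j}={}_xB_{n,j}\circ{}_yB_{m,j}={}_yB_{m,j}\circ{}_xB_{n,j}$ and likewise for $B_{n,m}$. Throughout, ``$\leq$'' between functions means pointwise on $[0,1]^2$, and I will repeatedly use that a positive parametric extension preserves such pointwise inequalities.

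For the lower bound $f\leq B_{n,m,j}f$, first I would fix $x$ and note that by the definition of $K_j^{[2]}$ the section $f(x,\cdot)$ lies in $K_j^{[1]}$; applying the univariate Proposition \ref{p4.1} in the $y$-variable yields $f\leq{}_yB_{m,j}f$. Applying the positive operator ${}_xB_{n,j}$ to this inequality gives ${}_xB_{n,j}f\leq{}_xB_{n,j}({}_yB_{m,j}f)=B_{n,m,j}f$. Next, fixing $y$ and using $f(\cdot,y)\in K_j^{[1]}$, Proposition \ref{p4.1} in the $x$-variable gives $f\leq{}_xB_{n,j}f$. Chaining the two inequalities produces $f\leq{}_xB_{n,j}f\leq B_{n,m,j}f$, which is the left half of (\ref{eB1}).

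For the upper bound $B_{n,m,j}f\leq B_{n,m}f$, I would again split the comparison variable by variable. From Proposition \ref{p4.1} applied in $y$ (for each fixed $x$) we get ${}_yB_{m,j}f\leq{}_yB_m f$; applying the positive operator ${}_xB_{n,j}$ and using commutativity of the parametric extensions gives
$$B_{n,m,j}f={}_xB_{n,j}({}_yB_{m,j}f)\leq{}_xB_{n,j}({}_yB_m f)={}_yB_m({}_xB_{n,j}f).$$
From Proposition \ref{p4.1} applied in $x$ (for each fixed $y$) we get ${}_xB_{n,j}f\leq{}_xB_n f$; applying the positive operator ${}_yB_m$ yields ${}_yB_m({}_xB_{n,j}f)\leq{}_yB_m({}_xB_n f)=B_{n,m}f$. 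Concatenating these gives $B_{n,m,j}f\leq B_{n,m}f$, completing (\ref{eB1}).

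Finally, a remark on difficulty: there is no genuine analytic obstacle here, since all the substantive work was done in the univariate Proposition \ref{p4.1}. The only points requiring care are bookkeeping ones: ensuring each univariate inequality is applied to the correct section (which is legitimate precisely because membership in $K_j^{[2]}$ guarantees that \emph{both} families of sections lie in $K_j^{[1]}$), and invoking positivity and the commutativity ${}_xB_{n,j}\circ{}_yB_m={}_yB_m\circ{}_xB_{n,j}$ at exactly the right step so that the intermediate object ${}_yB_m({}_xB_{n,j}f)$ bridges $B_{n,m,j}f$ and $B_{n,m}f$.
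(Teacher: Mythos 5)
Your proposal is correct. For the lower bound $f\leq B_{n,m,j}f$ you argue exactly as the paper does: apply Proposition \ref{p4.1} to the sections $f(x,\cdot)$ and $f(\cdot,y)$ and lift one of the resulting inequalities through the other parametric extension using positivity (the paper lifts $f\leq{}_xB_{n,j}f$ through ${}_yB_{m,j}$, you lift $f\leq{}_yB_{m,j}f$ through ${}_xB_{n,j}$ --- a symmetric variant of the same step). For the upper bound the routes genuinely diverge: the paper returns to the nodes, using that $f$ is increasing in each variable together with $t_{n,i}^j\leq i/n$ and $t_{m,k}^j\leq k/m$ to get $f(t_{n,i}^j,t_{m,k}^j)\leq f(i/n,k/m)$ termwise in the double sum, whereas you never reopen the sums: you invoke the univariate comparison $B_{n,j}g\leq B_ng$ from Proposition \ref{p4.1} as a black box and interpolate through the mixed operator ${}_yB_m\circ{}_xB_{n,j}$, using positivity and the commutativity of parametric extensions in distinct variables. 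Your version is more modular --- it would survive any other proof of the univariate comparison and generalizes mechanically to more variables --- at the cost of needing the (true but worth stating) identity ${}_xB_{n,j}\circ{}_yB_m={}_yB_m\circ{}_xB_{n,j}$; the paper's version is more elementary and self-contained but repeats the node-domination argument already used in the univariate case. Both are complete proofs.
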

	\begin{proof}
		Let $f\in K_j^{[2]}$. Then $f(\cdot,y)\in K_j^{[1]}$, $y\in[0,1]$. Using Proposition \ref{p4.1} we obtain
		\begin{equation}\label{eB2}f(x,y)\leq \,_xB_{n,j}f(x,y),\, x,y\in[0,1].  \end{equation}
		From (\ref{eB2}) it follows that 
		\begin{equation}\label{eB3} _yB_{m,j}f(x,y)\leq B_{n,m,j}f(x,y). \end{equation}
		On the other hand $f(x,\cdot)\in K_j^{[1]}$ and the same Proposition \ref{p4.1} tells us that 
		\begin{equation}\label{eB4}f(x,y)\leq \,_yB_{m,j}f(x,y).\end{equation}
		Combining (\ref{eB3}) and (\ref{eB4}) we get
		\begin{equation}\label{eB5}
			f(x,y)\leq B_{n,m,j}f(x,y),\,\, x,y\in[0,1].
		\end{equation}
		Remember that $f(x,\cdot)$ and $f(\cdot,y)$ are increasing functions for all $x,y\in[0,1]$. Moreover,
		$$t_{n,i}^j\leq\dfrac{i}{n},\,\,\,\,t_{m,k}^j\leq\dfrac{k}{m}.  $$
		It follows that
		\begin{equation}\label{eB6}f\left(t_{n,i}^j,t_{m,k}^j\right)\leq f\left(\dfrac{i}{n},t_{m,k}^j\right)\leq f\left(\dfrac{i}{n},\dfrac{k}{m}\right),\end{equation}
		for all $i=0,\dots, n$, $k=0,\dots, m. $
		
		From (\ref{Bernstein}),  (\ref{Aldaz}) and (\ref{eB6}) we get
		\begin{equation}\label{eB7p}
			B_{n,m,j}f(x,y)\leq B_{n,m}f(x,y),\,\, x,y\in[0,1].
		\end{equation}
		Now (\ref{eB1}) is a consequence of (\ref{eB5}) and (\ref{eB7p}).
	\end{proof}
	
	\begin{remark}
	Let $f\in C^{2,2}([0,1]^2)$. According to Theorem \ref{T3.3}   if $Uf\geq 0$ and $Vf\geq 0$, then $f\in K_j^{[2]}$ and consequently  Theorem \ref{T4.1} shows that $B_{n,n,j}f\geq f$, $n\geq 1$.
	
	If Conjecture \ref{Conj4.1} is valid and $B_{n,n,j}f\geq f$, $n\geq 1$, then $Uf+Vf\geq 0$.
	
{\bf	Is it true that if $Uf+Vf\geq 0$, then $B_{n,n,j}f\geq f$, $n\geq 1$? }
	\end{remark}
	
	As functions from $K_j^{[1]}$,  $f(\cdot,y)$ and $f(x,\cdot)$ are characterized in Theorem \ref{T3.3}. For the sake of simplicity we will use here the construction described in Theorem \ref{T3.3} with $\varphi\in C^1[0,1]$, $\varphi(x)\geq 0$, $\varphi^{\prime}(x)\geq 0$, $x\in[0,1]$. Suppose that $f\in C^{2,2}([0,1]^2)$. Using convenient notation we have \begin{equation}\label{eB7}f(x,y)=f(0,y)+\displaystyle\int_0^x t^{j-1}\varphi(t,y)dt\end{equation}
	and
	\begin{equation}\label{eB8} f(x,y)=f(x,0)+\displaystyle\int_0^y s^{j-1}\psi(x,s)ds, \end{equation}
	where 
	\begin{align}\label{eB12}&\varphi\in C^{1,1}([0,1]^2),\,\varphi\geq 0,\,\, \varphi_x^{\prime}\geq 0,\\
		& \psi\in C^{1,1}([0,1]^2),\,\psi\geq 0,\,\, \psi_y^{\prime}\geq 0.\label{eB13}\end{align}
	
	Due to (\ref{eB7}) and  (\ref{eB8}) we need the compatibility condition  
	\begin{equation}\label{eB9} f(x,y)=f(0,y)+\displaystyle\int_0^x t^{j-1}\varphi(t,y)dt=f(x,0)+\displaystyle\int_0^y s^{j-1}\psi(x,s)ds.\end{equation}
	Taking in (\ref{eB9}) the derivative with respect to $x$ and then with respect to $y$ we get
	\begin{equation}\label{eB10}t^{j-1}\varphi_y^{\prime}(t,s)=s^{j-1}\psi_x^{\prime}(t,s). \end{equation}
	Conversely, we will show that if (\ref{eB10}) is fulfilled then (\ref{eB7}) and  (\ref{eB8}) give us a function $f(x,y)$ for which the compatibility condition (\ref{eB9}) is fulfilled.
	
	In (\ref{eB10}) take the integral with respect to $t$ on the interval $[0,x]$ and then the integral with respect to $s$ on the interval $[0,y]$. This yields 
	\begin{align}
		&\displaystyle\int_0^x t^{j-1}\varphi(t,y) dt+\int_0^y s^{j-1}\psi(0,s)ds\nonumber\\
		&=\displaystyle\int_0^y s^{j-1}\psi(x,s) ds+\int_0^x t^{j-1}\varphi(t,0)dt=:f(x,y).\label{eB11}
	\end{align}
	It is easy to check that the above function $f(x,y)$ satisfies (\ref{eB9}).
	
	To resume, we have proved 
	\begin{theorem}
		\label{T4.2} Let $\varphi$ and $\psi$ satisfying (\ref{eB12}), (\ref{eB13}) and (\ref{eB10}). The function $f$ given by (\ref{eB11}) is in $K_j^{[2]}$ and satisfies
		$$f\leq B_{n,m,j}f\leq B_{n,m}f. $$
	\end{theorem}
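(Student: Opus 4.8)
The plan is to reduce the bivariate claim to the one-variable theory already in hand: I would show that every coordinate slice of $f$ lies in $K_j^{[1]}$, so that $f\in K_j^{[2]}$ by definition, and then invoke Theorem \ref{T4.1} to obtain the two-sided inequality for free. The whole argument is therefore a ``slicing'' argument, and the only real work is checking hypotheses.

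First I would fix $y\in[0,1]$ and examine the slice $x\mapsto f(x,y)$. By the integral representation (\ref{eB7}) this slice equals the number $f(0,y)$ plus $h(x):=\int_0^x t^{j-1}\varphi(t,y)\,dt$. The hypotheses (\ref{eB12}) say precisely that, for each fixed $y$, the function $\varphi(\cdot,y)$ is in $C^1[0,1]$ (since $\varphi\in C^{1,1}([0,1]^2)$), is nonnegative, and has nonnegative derivative $\varphi_x^{\prime}(\cdot,y)\geq 0$. Hence Corollary \ref{c3.2} applies verbatim and gives $h\in K_j^{[1]}$. Because $K_j^{[1]}$ is invariant under adding a constant---both defining conditions, namely that the function be increasing and that $f(\cdot^{1/j})$ be convex, survive a constant shift---I conclude $f(\cdot,y)\in K_j^{[1]}$.

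Symmetrically, fixing $x\in[0,1]$ and using the representation (\ref{eB8}) together with the hypotheses (\ref{eB13}) on $\psi$, the same Corollary \ref{c3.2} yields $f(x,\cdot)\in K_j^{[1]}$. Thus both families of slices lie in $K_j^{[1]}$, which is exactly the definition of $f\in K_j^{[2]}$, and Theorem \ref{T4.1} then gives the chain $f\leq B_{n,m,j}f\leq B_{n,m}f$, completing the proof.

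The hard part is not the monotonicity/convexity bookkeeping, which is routine once Corollary \ref{c3.2} is available; it is ensuring that the two integral representations (\ref{eB7}) and (\ref{eB8}) describe one and the same function, so that the one-dimensional argument can legitimately be run in \emph{both} variables on a single object $f$. This is exactly the role of the compatibility condition (\ref{eB10}): I would rely on the fact, established in the discussion preceding the statement, that (\ref{eB10}) guarantees the consistency of (\ref{eB7}) and (\ref{eB8}) and that the $f$ defined by (\ref{eB11}) indeed satisfies (\ref{eB9}). With that well-definedness granted, nothing further about (\ref{eB10}) is needed, and the proof is essentially a two-line invocation of Corollary \ref{c3.2} and Theorem \ref{T4.1}.
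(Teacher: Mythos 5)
Your proposal is correct and follows essentially the same route as the paper: the paper's ``proof'' is precisely the discussion preceding the theorem, which uses (\ref{eB10}) to reconcile the two representations (\ref{eB7}) and (\ref{eB8}), observes that each slice then falls under the construction of Theorem \ref{T3.3}/Corollary \ref{c3.2} (so $f\in K_j^{[2]}$), and concludes via Theorem \ref{T4.1}. Your explicit remark that $K_j^{[1]}$ is invariant under adding a constant is a small but worthwhile clarification that the paper leaves implicit.
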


	\begin{exam}
		Let $\varphi(x,y)=\psi(x,y)=h(x^j+y^j), \,h\in C^1[0,2]$, $h\geq 0$, $h^{\prime}\geq 0$. We have $x^{j-1}\varphi_y^{\prime}(x,y)=y^{j-1}\psi_x^{\prime}(x,y)$, and conditions (\ref{eB12}), (\ref{eB13}), (\ref{eB10}) are verified.

The function $f$ given by (\ref{eB11}) is in this case
$$ f(x,y)=\dfrac{1}{j}\int_0^{x^j+y^j}h(u)du. $$
	\end{exam}
	
	\begin{exam}\label{Ex4}  Let  $\varphi(x,y)=2y^2e^{x^2y^2}$ and $\psi(x,y)=2x^2e^{x^2y^2}$. For $j=2$ the conditions (\ref{eB12}), (\ref{eB13}), (\ref{eB10}) are verified. Therefore, from (\ref{eB11}) we get 
		$$f(x,y)=\displaystyle\int_0^x t^{j-1}\varphi(t,y)dt =e^{x^2y^2}-1.$$
		For $n=3$, $m=4$ and $j=2$  Figures \ref{F4.1.1} and \ref{F4.1.2} illustrate the inequalities\linebreak
		$f\leq B_{3,4,2}f\leq B_{3,4}$ (see Theorem \ref{T4.2}).
		
			\begin{minipage}{\linewidth}
			\centering
			\begin{minipage}{0.45\linewidth}
				\begin{figure}[H]
					\includegraphics[width=\linewidth]{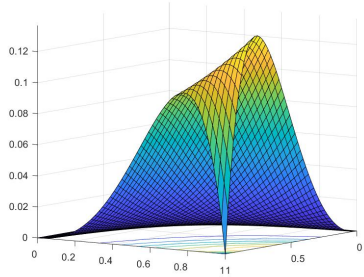}
					\caption{Graph of $B_{3,4,2}f-f$}\label{F4.1.1}
				\end{figure}
			\end{minipage}
			\hspace{0.05\linewidth}
			\begin{minipage}{0.45\linewidth}
				\begin{figure}[H]
					\includegraphics[width=\linewidth]{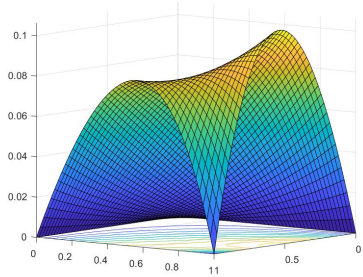}
					\caption{Graph of $B_{3,4}f-B_{3,4,2}f$ }\label{F4.1.2}
				\end{figure}
			\end{minipage}
		\end{minipage}

	\medskip
		
		For  $j=2$
		in Table \ref{table1} we present the error of approximation for AKR operator ($E_{AKR}f$) and Bernstein operator ($E_Bf$) for certain values of $n$ and $m$.
		Note that in this case the approximation by AKR  operator  is better than the approximation by the Bernstein operator. 
		
		\medskip
		
		\begin{tb}\label{table1}
			\centering
			{\it  Error of approximation }
			
			$  $
			
			\begin{tabular}{ccccccc}\hline
				$n=m$&$10$&$20$&$30$&$40$&$50$&$60$ \\  \hline
				$E_{B}f$   &  $0.1057$ &  $0.0516$ & $ 0.0342$ & $0.0255$ & $0.0204$ & $0.0169$\\
				$E_{AKR}f $   &  $0.0449$ &  $0.0215$ & $0.0142$ & $0.0106$ & $0.0084$ & $0.0070$\\
				\hline
			\end{tabular}
		\end{tb}
	\end{exam}
	
	\medskip
	
	Next we present two methods in order to construct a function $f\in C([0,1]^2)$ for which the   AKR operator acts better than Bernstein operator.

	\vskip 0.1in

	{\bf I.} Let $\omega(u,v)$ be such that $\omega(\cdot,v_0)$ and $\omega(u_0,\cdot)$ are increasing and convex, for all $u_0,\, v_0\in[0,1]$. For $x,y\in[0,1]$ let $a(x)$ and $b(y)$ be increasing and convex functions. Define $g(x,y)=\omega(a(x),b(y))$. It can be verified immediately  that $g(\cdot,y_0)$ and $g(x_0,\cdot)$ are increasing and convex, for all $x_0,\, y_0\in[0,1]$. Then, we can consider $f(x,y):=g(x^j,y^j)$ for which AKR operator acts better than Bernstein operator.
	
	\begin{exam} Let $\omega(u,v)=\tan\left(\dfrac{\pi}{4}uv\right),\,0\leq u,v\leq 1$, $a(x)=2^x-1$, $b(y)=y^3$. 
		Then, for $f(x,y)=\tan\left(\dfrac{\pi}{4}(2^{x^j}-1)y^{3j}\right)$ the approximation by AKR operator is better than the approximation by the Bernstein operator.
		
			For $n=3$, $m=4$ and $j=2$  Figures \ref{F4.2.1} and \ref{F4.2.2} illustrate the inequalities\linebreak
		$f\leq B_{3,4,2}f\leq B_{3,4}$.

		\begin{minipage}{\linewidth}
			\centering
			\begin{minipage}{0.45\linewidth}
				\begin{figure}[H]
					\includegraphics[width=\linewidth]{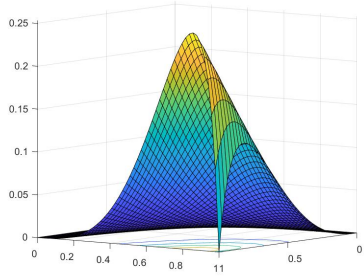}
					\caption{Graph of $B_{3,4,2}f-f$}\label{F4.2.1}
				\end{figure}
			\end{minipage}
			\hspace{0.05\linewidth}
			\begin{minipage}{0.45\linewidth}
				\begin{figure}[H]
					\includegraphics[width=\linewidth]{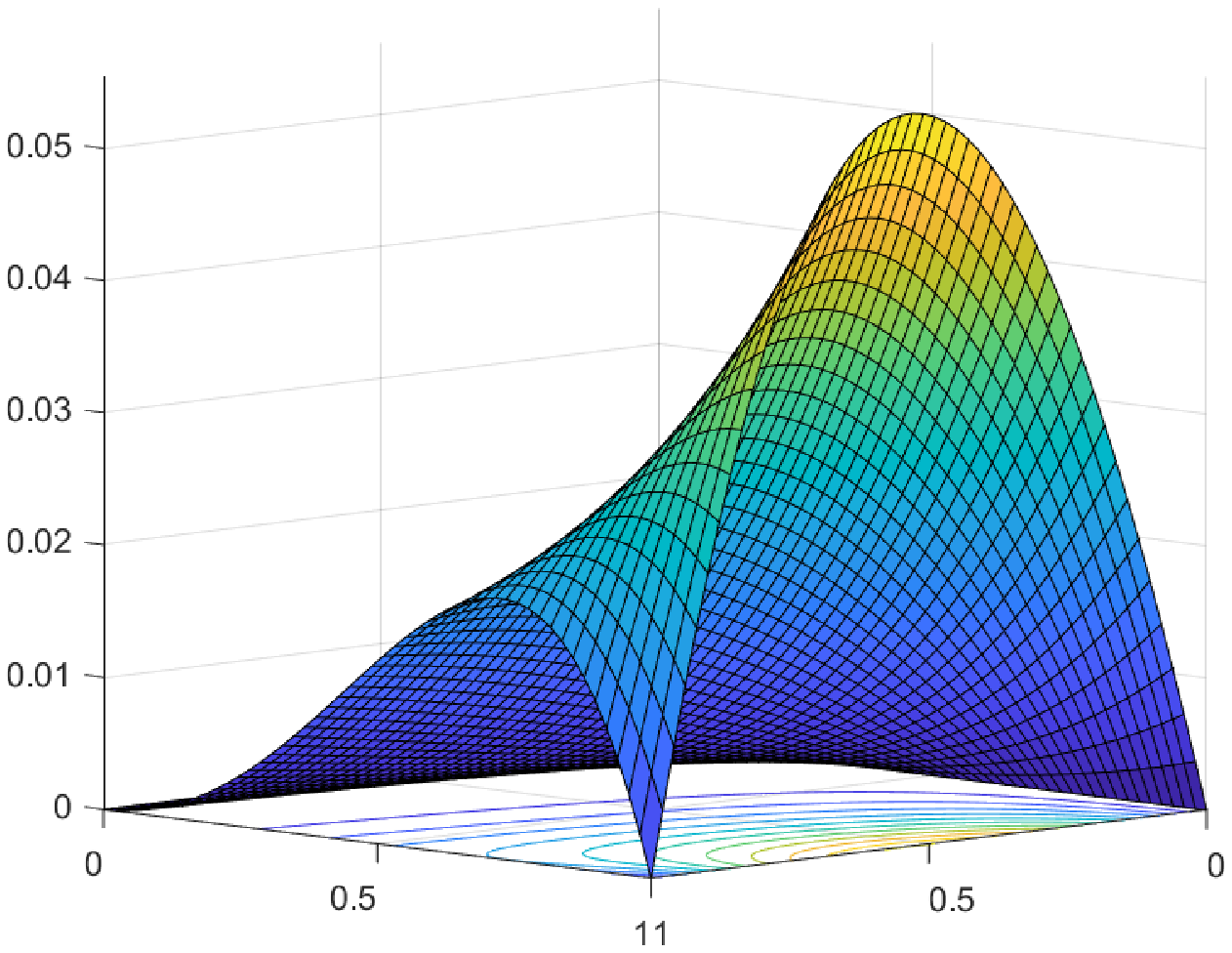}
					\caption{Graph of $B_{3,4}f-B_{3,4,2}f$ }\label{F4.2.2}
				\end{figure}
			\end{minipage}
		\end{minipage}
	\end{exam}
	\medskip
	
	{\bf II.} We want to dispose of two functions $\varphi$ and $\psi$ satisfying conditions (\ref{eB12}), (\ref{eB13}) and (\ref{eB10}). Choose $\varphi\in C^{1,2}([0,1]^2)$ such that
	
	\begin{equation}\label{eC1}\varphi\geq 0,\,\, \varphi^{\prime}_x\geq 0,\,\, \varphi^{\prime}_y\geq 0,\,\, y\varphi^{\prime\prime}_{y^2}-(j-1)\varphi_y^{\prime}\geq 0.\end{equation}
	Moreover, assume that the function \begin{equation}\label{eD2}\psi(x,y):=y^{1-j}\displaystyle\int_{0}^x t^{j-1}\varphi_y^{\prime}(t,y)dt\end{equation} is in $C^{1,1}([0,1]^2)$.
	It is easy to verify that the conditions (\ref{eB12}), (\ref{eB13}) and (\ref{eB10}) are indeed satisfied.
	
	As far as conditions (\ref{eC1}) are concerned we can start with a function $\tau\in C^{1,1}([0,1]^2)$ subject to the conditions
	\begin{equation}\label{eD3} \tau\geq 0, \,\, \tau^{\prime}_x\geq 0,\,\, \tau_y^{\prime}\geq 0,  \end{equation}
	and then construct \begin{equation}\label{eD4}\varphi(x,y)=\displaystyle\int_0^y s^{j-1}\tau(x,s)ds.\end{equation}
	It will satisfy (\ref{eC1}). From (\ref{eD2}) we get
	\begin{equation}
		\label{eD5}
		\psi(x,y)=\displaystyle\int_0^x t^{j-1}\tau(t,y)dt.
	\end{equation}

\begin{conclusion}Starting with $\tau$ satisfying (\ref{eD3}), the equations (\ref{eD4}), (\ref{eD5}), (\ref{eB11}) yield the function  
\begin{equation}
	\label{eD6}
	f(x,y)=\displaystyle\int_0^x\int_0^yt^{j-1}s^{j-1}\tau(t,s)dtds,
	\end{equation}
which is approximated by AKR operator better than by Bernstein operator, in the sense  that 
\begin{equation}\label{eE1}f\leq B_{n,m,j}f\leq B_{n,m}f. \end{equation}
\end{conclusion}
  In Example \ref{Ex4}, $\tau(x,y)=4(1+x^2y^2)e^{x^2y^2}.$
	
	\begin{exam} Let $\tau(t,s)=\sin \dfrac{\pi(t+s)}{4}$. The function $f$ given by (\ref{eD6}) is 
		\begin{align*} f(x,y)&=\dfrac{1}{\pi^4}\left[-64\pi(y + x)\cos(\pi(y + x)/4) + (-16\pi^2xy + 256)\sin(\pi(y + x)/4) \right.\\
		&+ \left.64\pi x\cos(\pi x/4) + 64\pi y\cos(\pi y/4) - 256\sin(\pi x/4) - 256\sin(\pi y/4)\right]. \end{align*}
	Figures \ref{F4.3.1} and \ref{F4.3.2} show that $f\leq B_{4,4,2}f\leq B_{4,4}f$. This is an illustration of the inequalities (\ref{eE1}).

\begin{minipage}{\linewidth}
	\centering
	\begin{minipage}{0.45\linewidth}
		\begin{figure}[H]
			\includegraphics[width=\linewidth]{Fs4.3.1}
			\caption{Graph of $B_{4,4,2}f-f$}\label{F4.3.1}
		\end{figure}
	\end{minipage}
	\hspace{0.05\linewidth}
	\begin{minipage}{0.45\linewidth}
		\begin{figure}[H]
			\includegraphics[width=\linewidth]{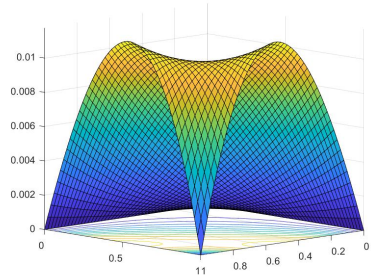}
			\caption{Graph of $B_{4,4}f-B_{4,4,2}f$ }\label{F4.3.2}
		\end{figure}
	\end{minipage}
\end{minipage}
	\end{exam}

	\subsection{On computing with Bernstein and AKR operators}
	Here we briefly describe some implementation details, which are useful for people interested in reproducing the results of the paper and/or wish to get more computational insights. In GitHub at the link
	
	\url{https://github.com/demarchi17/Bernstein-and-AKR-Operators} 
	
	\noindent there is the Matlab script {\tt AldazBernstein1d2d.m} that allows to construct both operators on $I=[0,1]$ and in the unit square $Q=[0,1] \times [0,1]$. In the
	interval we have considered the function
	$f(x)=-\dfrac{2}{\pi}x\cos\dfrac{\pi x}{2}+\dfrac{4}{\pi^2}\sin\dfrac{\pi x}{2}$ (see Example \ref{ex.1}) while in the
	unit square the 3 functions $f(x,y)$ of Examples 4.2, 4.3 and 4.4. 
	
	On input, the user is asked to provide
	the higher degree $N$ up to which to approximate the functions and the AKR index $j$. Then, a loop on the degree $n$ gives the results presented in Figures \ref{fig:1},\ref{fig:2},..., \ref{F4.3.1},\ref{F4.3.2} (see the corresponding Examples for details). More in details, the script allows: 
	\begin{itemize}
		\item to compute Bernstein and AKR points in $I$ and $Q$;
		\item to construct Bernstein $B_n$ and AKR operators $B_{n,m,j}$ and evaluate them on a suitable and larger grid, say $X_M$ where $M=(n+1)^2$ (constructed via {\tt meshgrid} on the unit square);
		\item in $[0,1]$ to compute the relative 2-norm error 
		$$E_r:=\frac{\| f- B\|_2}{\|f\|_2 }$$
		where $B$ is one of the operators and $f$ one of the functions previously considered;
		\item in $[0,1]^2$ to compute the errors
			$B_{n,m,j}f-f$ and
		$B_{n,m} f - B_{n,m,j}f$
		\item to make the plots.
	\end{itemize}
	The script available at GitHub can be downloadable and the interested readers can play and tell us if there are improvements and bugs.

	\section{Conclusions and further work}
	The AKR operators have been subject of intense research. They are important in Approximation Theory, where rate of convergence and Voronovskaja formula play a significant role. Their shape preserving properties are useful in CAGD. 
	
	Our paper has two aims. On one hand, we introduce the bivariate version of the AKR operators on $C([0,1]^2)$ and investigate some approximation properties of them. On the other hand we compare, in the univariate case and also in the bivariate case, the approximation provided by AKR operators with that provided by Bernstein operators, in the spirit of \cite{Aldaz2}, \cite{Aldaz1}. More precisely, we describe families of functions which are better approximated by AKR operators and families of functions which are better approximated by Bernstein operators. Numerical and graphical experiments illustrate the theoretical results.
	
	Theorem 22 and Theorem 24 in \cite{AKR2009} exhibit shape preserving properties of AKR operators in the univariate setting. See also \cite{Aldaz2}, \cite{Aldaz1}.
	It is known that the usual convexity is not generally invariant under the bivariate Bernstein operators. Families of convex functions for which the convexity is preserved by these operators are described in \cite{70} and \cite[Section 3.4]{Altomare_Rasa}. The monotonicity of the sequence $(B_{n,j}f)_{n\geq 1}$ for generalized convex functions $f$ is presented in \cite[Theorem 19]{AKR2009}. In the multivariate case this kind of monotonicity is investigated in \cite[Section 3.5]{Altomare_Rasa}.
The preservation of convexity and the above mentioned kind of monotonicity under the bivariate AKR operators deserve to be investigated, together with their applications to CAGD.
	
	$  $
	
	\noindent{\bf Acknowledgments}. This work has been accomplished within the Rete Italiana di Approssimazione and the UMI Group "Teoria dell'Approssimazione e Applicazioni". The first author has been supported by the INdAM-GNCS {\it Visiting Professors program 2021}. The second author has been also partially supported by the Erasmus+ Programme for Teaching Staff Mobility between the Universities of Padova and Sibiu.
	
	\bibliographystyle{amsplain}
	
\end{document}